\newtheorem{thm}{Theorem}[section]
\newtheorem{prp}[thm]{Proposition}
\newtheorem{lmm}[thm]{Lemma}
\newtheorem{crl}[thm]{Corollary}
\theoremstyle{definition}
\newtheorem{dfn}[thm]{Definition}
\newtheorem{eg}[thm]{Example}
\theoremstyle{remark}
\newtheorem{rmk}[thm]{Remark}
\numberwithin{equation}{section}
\def\lra{\longrightarrow}
\def\ri{\rightarrow}
\def\BE#1{\begin{equation}\label{#1}}
\def\EE{\end{equation}}
\def\lr#1{\langle#1\rangle}
\def\flr#1{\left\lfloor{#1}\right\rfloor}
\def\blr#1{\big\langle#1\big\rangle}
\def\ti#1{\tilde{#1}}
\def\wt#1{\widetilde{#1}}
\def\ov#1{\overline{#1}}
\def\eref#1{(\ref{#1})}
\def\tn#1{\textnormal{#1}}
\def\sf#1{\textsf{#1}}
\def\Ga{\Gamma}
\def\La{\Lambda}
\def\Si{\Sigma}
\def\de{\delta}
\def\gm{\gamma}
\def\na{\nabla}
\def\om{\omega}
\def\si{\sigma}
\def\th{\theta}
\def\vph{\varphi}
\def\ze{\zeta}
\def\fB{\mathfrak B}
\def\C{\mathbb C}
\def\cD{\mathcal{D}}
\def\ne{\textnormal{e}}
\def\cH{\mathcal H}
\def\bI{\mathbb I}
\def\fI{\mathfrak i}
\def\cJ{\mathcal J}
\def\fJ{\mathfrak j}
\def\cM{\mathcal M}
\def\fM{\mathfrak M}
\def\cO{\mathcal O}
\def\P{\mathbb P}
\def\R{\mathbb{R}}
\def\bT{\mathbb{T}}
\def\bS{\mathbb S}
\def\tU{\tn{U}}
\def\fU{\mathfrak U}
\def\Z{\mathbb{Z}}
\def\a{\mathbf a}
\def\fa{\mathfrak a}
\def\fc{\mathfrak c}
\def\ft{\mathfrak t}
\def\tnd{\textnormal{d}}
\def\ev{\tn{ev}}
\def\Hom{\tn{Hom}}
\def\id{\textnormal{id}}
\def\Id{\tn{Id}}
\def\pt{\tn{pt}}
\def\rk{\textnormal{rk}}
\def\tO{\tn{O}}
\def\SU{\tn{SU}}
\def\U{\tn{U}}
\def\top{\textnormal{top}}
\def\0{\mathbf 0}
\def\1{\mathbf 1}
\def\dbar{\bar\partial}
\def\prt{\partial}
\def\eset{\emptyset}
\def\i{\infty}
\def\bp{\bar\partial}
\def\w{\wedge}
\def\oD{\mathring{D}^2}
\begin{document}

\title{Orientability in Real Gromov-Witten Theory}
\author{Penka Georgieva and 
Aleksey Zinger\thanks{Partially supported by the IAS Fund for Math and NSF grants 
DMS 0635607 and 0846978}}
\date{\today}
\maketitle

\begin{abstract}
\noindent
The orientability problem in real Gromov-Witten theory is one
of the fundamental hurdles to enumerating real curves.
In this paper, we describe topological conditions on the target 
manifold which ensure that the uncompactified moduli spaces 
of real maps are orientable for {\it all} genera of and for {\it all} types of 
involutions on the domain.
In contrast to the typical approaches to this problem, we do not compute
the signs of any diffeomorphisms, but instead compare them.
Many projective complete intersections, including the renowned quintic threefold,
satisfy our topological conditions.
Our main result yields real Gromov-Witten invariants of arbitrary genus 
for real symplectic manifolds that satisfy these conditions and have empty real locus
and illustrates the significance of previously introduced moduli spaces of 
maps with crosscaps.
We also apply it to study the orientability of the moduli spaces
of real Hurwitz covers. 
\end{abstract}

\tableofcontents

\section{Introduction}
\label{intro_sec}

\noindent
The theory of $J$-holomorphic maps plays a prominent role in symplectic topology,
algebraic geometry, and string theory.
The foundational work of~\cite{Gr,Witten,McSa94,RT,FO,LT} has 
established the theory of (closed) Gromov-Witten invariants,
i.e.~counts of $J$-holomorphic maps from closed Riemann surfaces to symplectic manifolds.
In contrast, the theory of real Gromov-Witten invariants, 
i.e.~counts of $J$-holomorphic maps from symmetric Riemann surfaces commuting 
with the involutions on the domain and the target,
is still in early stages of development, especially in positive genera. 
The two main obstacles to defining real Gromov-Witten invariants are the potential 
non-orientability of the moduli space of real $J$-holomorphic maps and 
the existence of real codimension-one boundary strata.
In this paper, we address the former, obtaining sufficient topological  conditions 
on the target manifold for these moduli spaces to be orientable for {\it all} genera~of
and for {\it all} types of involutions on the domain; 
see Definition~\ref{realorient_dfn} and Theorem~\ref{thm_maps}.
Theorem~\ref{thm_maps}
yields real Gromov-Witten invariants of arbitrary genus 
for real symplectic manifolds that satisfy these conditions and have empty real locus;
see Theorem~\ref{thm_inv}.
Many projective complete intersections, including the quintic threefold
which plays a central role in Gromov-Witten theory,
satisfy our topological conditions; see Corollary~\ref{CIorient_crl}.\\

\noindent
The orientability question in real Gromov-Witten theory is studied in 
\cite{Wel, Pi, Sol, FOOO9, Ge2, Remi, Teh, GZ, GZ2}.
Real maps can be naturally divided into two groups,
depending on whether the involution~$\si$ on the domain~$\Si$
has separating fixed locus~$\Si^{\si}$ or~not.
In the first case, one can use bordered surfaces to obtain a good understanding 
of the orientability of the moduli spaces of such maps;
see \cite{Sol, FOOO9, Ge2, Remi, GZ, GZ2}.
In the second case, however, understanding the orientability in the bordered case 
is not sufficient beyond genus~1, due to the presence of real diffeomorphisms
of~$(\Si,\si)$ not preserving any half of~$\Si$; see Example~\ref{half_eg}.
The subtle effect of such diffeomorphisms on the orientability is hard to determine.
In~\cite{Remi}, this problem is studied for the diffeomorphisms of~$(\Si,\si)$
preserving some additional structure determined by a distinguished component 
of~$\Si^{\si}$ and a polarizing divisor on the target manifold~$X$,
obtaining orientability results for certain hypersurfaces in projective spaces.
In this paper, we adopt a fundamentally different approach: instead of computing
the signs of the actions induced by such diffeomorphisms, we directly compare
the orientation systems of the moduli spaces with the orientation systems
of certain bundles over~them naturally suggested by our previous study~\cite{GZ2};
see the end of this section for more details.
The orientable cases we discover include the orientable cases described in~\cite{Remi}
and go far beyond~them.\\

\noindent
An \textsf{involution} on a smooth manifold~$X$ is a diffeomorphism
$\phi\!:X\!\lra\!X$ such that $\phi\!\circ\!\phi\!=\!\id_X$.
Let
$$X^\phi=\big\{x\!\in\!X\!:~\phi(x)\!=\!x\big\}$$
denote the fixed locus.
An \sf{anti-symplectic involution~$\phi$} on a symplectic manifold $(X,\om)$
is an involution $\phi\!:X\!\lra\!X$ such that $\phi^*\om\!=\!-\om$.
A \sf{real symplectic manifold} is a triple $(X,\om,\phi)$ consisting 
of a symplectic manifold~$(X,\om)$ and an anti-symplectic involution~$\phi$.
For example, the~maps
\begin{alignat*}{2}
\tau_n\!:\P^{n-1}&\lra\P^{n-1}, &\qquad [Z_1,\ldots,Z_n]&\lra[\bar{Z}_1,\ldots,\bar{Z}_n],\\
\eta_{2m}\!: \P^{2m-1}& \lra\P^{2m-1},&\qquad
[Z_1,Z_2,\ldots,Z_{2m-1},Z_{2m}]&\lra 
\big[-\bar{Z}_2,\bar{Z}_1,\ldots,-\bar{Z}_{2m},\bar{Z}_{2m-1}\big],
\end{alignat*}
are anti-symplectic involutions with respect to the standard Fubini-Study symplectic
forms~$\om_n$ on~$\P^{n-1}$ and~$\om_{2m}$ on $\P^{2m-1}$, respectively.
If 
$$k\!\ge\!0, \qquad \a\equiv(a_1,\ldots,a_k)\in(\Z^+)^k\,,$$
and $X_{n;\a}\!\subset\!\P^{n-1}$ is a complete intersection of multi-degree~$\a$
preserved by~$\tau_n$,  $\tau_{n;\a}\!\equiv\!\tau_n|_{X_{n;\a}}$
is an anti-symplectic involution on $X_{n;\a}$ with respect to the symplectic form
$\om_{n;\a}\!=\!\om_n|_{X_{n;\a}}$. 
Similarly, if $X_{2m;\a}\!\subset\!\P^{2m-1}$ is preserved by~$\eta_{2m}$,
$\eta_{2m;\a}\!\equiv\!\eta_{2m}|_{X_{2m;\a}}$
is an anti-symplectic involution on $X_{2m;\a}$ with respect to the symplectic form
$\om_{2m;\a}\!=\!\om_{2m}|_{X_{2m;\a}}$.\\

\noindent
Let $(X,\phi)$ be a manifold with an involution.
A \sf{conjugation} on a complex vector bundle $V\!\lra\!X$ 
\sf{lifting} an involution~$\phi$ is a vector bundle homomorphism 
$\ti{\phi}\!:V\!\lra\!V$ covering~$\phi$ (or equivalently 
a vector bundle homomorphism  $\ti\phi\!:V\!\lra\!\phi^*V$ covering~$\id_X$)
such that the restriction of~$\ti\phi$ to each fiber is anti-complex linear
and $\ti{\phi}\!\circ\!\ti{\phi}\!=\!\id_V$.
A \sf{real bundle pair} $(V,\ti{\phi})\!\lra\!(X,\phi)$   
consists of a complex vector bundle $V\!\lra\!X$ and 
a conjugation~$\ti{\phi}$ on $V$ lifting~$\phi$.
For example, 
$$(TX,\tnd\phi)\lra(X,\phi) \qquad\hbox{and}\qquad
(X\!\times\!\C,\phi\!\times\!\fc_{\C})\lra(X,\phi),$$
where $\fc_{\C}\!:\C\!\lra\!\C$ is the standard conjugation on~$\C$,
are real bundle pairs.
For any real bundle pair $(V,\ti{\phi})\!\lra\!(X,\phi)$, 
we denote~by
$$\La_{\C}^{\top}(V,\ti{\phi})=(\La_{\C}^{\top}V,\La_{\C}^{\top}\ti{\phi})$$
the top exterior power of $V$ over $\C$ with the induced conjugation.
Direct sums, duals, and tensor products over~$\C$ of real bundle pairs over~$(X,\phi)$
are again real bundle pairs over~$(X,\phi)$.\\

\noindent
A \sf{symmetric surface} $(\Si,\si)$ is a closed connected oriented smooth 
surface~$\Si$ (manifold of real dimension~2) with an orientation-reversing involution~$\si$.
The fixed locus of~$\si$ is a disjoint union of circles.
If in addition $(X,\phi)$ is a manifold with an involution, 
a \sf{real map} 
$$u\!:(\Si,\si)\lra(X,\phi)$$ 
is a smooth map $u\!:\Si\!\lra\!X$ such that $u\!\circ\!\si=\phi\!\circ\!u$.
We denote the space of such maps by~$\fB_g(X)^{\phi,\si}$.\\

\noindent
For a symplectic manifold $(X,\om)$, we denote~by $\cJ_{\om}$
the space of $\om$-compatible almost complex structures on~$X$.
If $\phi$ is  an anti-symplectic involution on~$(X,\om)$, let 
$$\cJ_{\phi}=\big\{J\!\in\!\cJ_{\om}\!:\,\phi^*J\!=\!-J\big\}.$$
For a genus~$g$ symmetric surface~$(\Si,\si)$, we similarly denote by $\cJ_{\si}$
the space of complex structures on~$\Si$ compatible with the orientation such that 
$\si^*\fJ\!=\!-\fJ$.
For $J\!\in\!\cJ_{\phi}$, $\fJ\!\in\!\cJ_{\si}$, and
$u\!\in\!\fB_g(X)^{\phi,\si}$, let 
$$\dbar_{J,\fJ}u=\frac{1}{2}\big(\tnd u+J\circ\tnd u\!\circ\!\fJ\big).$$
If $l\!\in\!\Z^{\ge0}$, $J\!\in\!\cJ_{\phi}$, and $B\!\in\!H_2(X;\Z)$, let
\begin{equation*}\begin{split}
\fM_{g,l}(X,B;J)^{\phi,\si}= 
\big\{(u,(z_1,\si(z_1)),\ldots,(z_l,\si(z_l)),\fJ)\!\in\!
\fB_g(X)^{\phi,\si}\!\times\!\Si^{2l}\!\times\!\cJ_{\si}\!:\qquad&\\
~u_*[\Si]_{\Z}\!=\!B,~\dbar_{J,\fJ}u\!=\!0\big\}\big/\!\!\sim&
\end{split}\end{equation*}
be the moduli space of equivalence classes of degree~$B$ real $J$-holomorphic maps
from $(\Si,\si)$ to~$(X,\phi)$ with $l$ pairs of non-real conjugate distinct points; 
two $J$-holomorphic maps are equivalent in this space 
if they differ by an orientation-preserving diffeomorphism of~$\Si$ commuting with~$\si$.
Let 
$$\fM_{g,l}(X,B;J)^{\phi}=\bigcup_{\si}\fM_{g,l}(X,B;J)^{\phi,\si}$$
denote the union over all topological types of orientation-reversing involutions
on a genus~$g$ surface~$\Si$.
Using the geometric perturbations of~\cite{RT} adapted to the real case as
in \cite[Section~2]{Ge2}, we can perturb $\dbar_{J,\fJ}$ to achieve transversality; 
thus, we may assume that $\fM_{g,l}(X,B;J)^{\phi,\si}$ is an orbifold 
if the domain is stable. Under the assumptions of Remark~\ref{mfld_rem}, 
$\fM_{g,l}(X,B;J)^{\phi,\si}$ is a manifold outside codimension 2 strata and 
has a first Stiefel-Whitney class.  
 
\begin{dfn}\label{realorient_dfn}
A real symplectic manifold $(X,\om,\phi)$ is \sf{real-orientable} if
there exists a rank~1 real bundle pair $(L,\ti\phi)\!\lra\!(X,\phi)$ such~that 
\BE{realorient_e}w_2(TX^{\phi})=w_1(L^{\ti\phi})^2
\qquad\hbox{and}\qquad
\La_{\C}^{\top}(TX,\tnd\phi)=(L,\ti\phi)^{\otimes 2}\,.\EE
\end{dfn}

\begin{thm}\label{thm_maps} 
Let $(X,\om,\phi)$ be a real-orientable $2n$-manifold, $B\!\in\!H_2(X,\Z)$, 
$J\!\in\!\cJ_{\phi}$, $l\!\in\!\Z^{\ge0}$, and $(\Si,\si)$ be a symmetric surface
of genus $g\!\ge\!2$. 
\begin{enumerate}[label=(\arabic*),leftmargin=*]
\item\label{it_maps1} If $n$ is odd, then the moduli space $\fM_{g,l}(X,B;J)^{\phi,\si}$ is orientable.
\item If $g\!+\!2l\ge 4$, then
$$w_1(\fM_{g,l}(X,B;J)^{\phi,\si})=(n\!+\!1)\,\mathfrak{f}^*w_1(\cM_{g,l}^{\si}),$$
where $\mathfrak{f}\!:\fM_{g,l}(X,B;J)^{\phi,\si}\!\lra\!\cM_{g,l}^{\si}$ is the forgetful
morphism to the Deligne-Mumford moduli space of $\si$-compatible complex structures on~$\Si$.
\end{enumerate}
\end{thm}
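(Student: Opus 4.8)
\noindent
The plan is to reduce both statements to a computation with the real determinant line bundle of the linearized $\dbar$-operator and then to use the real orientation to compare this line with one pulled back from $\cM_{g,l}^{\si}$. After achieving transversality as in the discussion preceding Definition~\ref{realorient_dfn}, the linearization at a class $[(u,\ldots,\fJ)]$ identifies the virtual tangent space of $\fM_{g,l}(X,B;J)^{\phi,\si}$ with the index of the real Cauchy-Riemann operator $D_V$ on the real bundle pair $(V,\ti{u})\!\equiv\!(u^*TX,u^*\tnd\phi)$ over the fixed domain $(\Si,\si,\fJ)$, together with the tangent space to $\cM_{g,l}^{\si}$ arising from the variation of $\fJ$ and of the marked points. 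Writing $\lambda(D)$ for the real determinant line of an operator $D$, this splitting yields
$$w_1\big(\fM_{g,l}(X,B;J)^{\phi,\si}\big)=w_1\big(\lambda(D_V)\big)+\mathfrak{f}^*w_1\big(\cM_{g,l}^{\si}\big),$$
so that everything reduces to showing $w_1(\lambda(D_V))=n\,\mathfrak{f}^*w_1(\cM_{g,l}^{\si})$ modulo~2.

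\noindent
First I would use the real orientation to split off the ``determinant part'' of $\lambda(D_V)$. Since $\La_{\C}^{\top}(V,\ti{u})=(u^*L,u^*\ti\phi)^{\otimes2}$, the pair $(V,\ti u)$ has the same rank and the same determinant as
$$(u^*L,u^*\ti\phi)^{\oplus2}\oplus\big(\Si\!\times\!\C,\si\!\times\!\fc_{\C}\big)^{\oplus(n-2)},$$
while the class identity $w_2(TX^{\phi})=w_1(L^{\ti\phi})^2$ provides, along the real locus $\Si^{\si}$, the spin/pin data needed to match their orientation systems. As in~\cite{GZ2}, this produces a canonical isomorphism of orientation systems
$$\lambda(D_V)\cong\lambda(D_{u^*L})^{\otimes2}\otimes\lambda(D_{\cO})^{\otimes(n-2)},$$
where $D_{\cO}$ is the scalar operator on the trivial real pair. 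The factor $\lambda(D_{u^*L})^{\otimes2}$ carries the canonical diagonal orientation and is therefore trivial as an orientation system, so that $w_1(\lambda(D_V))=(n-2)\,w_1(\lambda(D_{\cO}))$.

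\noindent
It then remains to prove the scalar comparison $w_1(\lambda(D_{\cO}))=\mathfrak{f}^*w_1(\cM_{g,l}^{\si})$. Over the contractible space $\cJ_{\si}$ the determinant line of the scalar operator is canonically oriented, so $w_1(\lambda(D_{\cO}))$ is the pullback of the homomorphism recording the action of the mapping class group $\pi_0\big(\tn{Diff}^+(\Si)^{\si}\big)$ on this orientation; I would compare this action directly with the one defining $w_1(\cM_{g,l}^{\si})$ on the orientation of the Deligne-Mumford space. Granting this,
$$w_1(\lambda(D_V))=(n-2)\,\mathfrak{f}^*w_1(\cM_{g,l}^{\si})\equiv n\,\mathfrak{f}^*w_1(\cM_{g,l}^{\si})\pmod2,$$
whence $w_1(\fM_{g,l}(X,B;J)^{\phi,\si})=(n-1)\,\mathfrak{f}^*w_1(\cM_{g,l}^{\si})\equiv(n+1)\,\mathfrak{f}^*w_1(\cM_{g,l}^{\si})$, which is the second statement when $g\!+\!2l\!\ge\!4$. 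Part~\ref{it_maps1} follows by parity: if $n$ is odd then $(n+1)$ is even and the class vanishes; in the two small cases $(g,l)\!=\!(2,0),(3,0)$ excluded from the second statement, where $w_1(\cM_{g,l}^{\si})$ is not directly available, the same conclusion is obtained by running the isomorphism of the previous paragraph on $\fM_{g,l}(X,B;J)^{\phi,\si}$ itself and observing that, for $n$ odd, $\lambda(D_V)\otimes\mathfrak{f}^*\det T\cM_{g,l}^{\si}$ becomes an honest square and hence canonically oriented.

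\noindent
The main obstacle is the scalar comparison of the previous paragraph together with the verification that the isomorphisms of orientation systems are natural under \emph{all} of $\pi_0(\tn{Diff}^+(\Si)^{\si})$, and in particular under the diffeomorphisms of $(\Si,\si)$ that preserve no half of $\Si$; these are exactly the maps responsible for the difficulty in genus $g\!\ge\!2$ (see Example~\ref{half_eg}), and it is precisely here that comparing orientation systems, rather than computing the signs of individual diffeomorphisms, is essential. Controlling the interaction of the canonical orientation of $\lambda(D_{u^*L})^{\otimes2}$ and of the spin/pin data along $\Si^{\si}$ with such diffeomorphisms is the technical heart of the argument.
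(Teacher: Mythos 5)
Your opening reduction coincides with the paper's: write $w_1(\fM_{g,l}(X,B;J)^{\phi,\si})=w_1(\det D_{(TX,\tnd\phi)})+\mathfrak{f}^*w_1(\cM_{g,l}^{\si})$ via the forgetful map and reduce to showing $w_1(\det D_{(TX,\tnd\phi)})=n\,\mathfrak{f}^*w_1(\cM_{g,l}^{\si})$. But the two steps that carry all the content of the theorem are asserted rather than proven. The claimed canonical isomorphism of orientation systems $\lambda(D_V)\cong\lambda(D_{u^*L})^{\otimes2}\otimes\lambda(D_{\cO})^{\otimes(n-2)}$ must hold along \emph{every} loop in the moduli space, i.e.\ over the mapping torus $(M_G,\si_G)$ of an \emph{arbitrary} real diffeomorphism $G$, and the results of \cite{GZ2} that you invoke with ``as in GZ2'' cover only diffeomorphisms that preserve a half of $\Si$ or map it to its conjugate; extending beyond those is exactly what this paper is about. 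Moreover, the specific splitting you propose -- matching $(V,\ti{u})$ with $2u^*(L,\ti\phi)\oplus(n\!-\!2)(\Si\!\times\!\C,\si\!\times\!\fc_{\C})$ on the grounds of equal rank and determinant -- cannot be obtained from the splitting results that are actually available (Propositions~\ref{bndlsplit_prp0} and~\ref{bndlsplit_prp}): those require the real part of the bundle pair over $M_G$ to be orientable \emph{and spin}, whereas $w_2(TX^{\phi})=w_1(L^{\ti\phi})^2$ need not vanish when pulled back to $M_G^{\si_G}$. The paper circumvents this by stabilizing: it replaces $TX$ by $TX\oplus2L$, whose real part has $w_2=w_2(TX^{\phi})+w_1(L^{\ti\phi})^2=0$ -- this is precisely where the first condition in~\eref{realorient_e} enters -- notes that $w_1(\det D_{(TX\oplus2L,\tnd\phi\oplus2\ti\phi)})=w_1(\det D_{(TX,\tnd\phi)})$ by~\eref{sum}, and then splits off not the bundle itself but $W\oplus\big(\La_{\C}^{\top}W\big)^*\approx(n\!+\!3)$ trivial pairs via Proposition~\ref{bndlsplit_prp}, finishing with the canonical orientations on determinants of doubled bundles. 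Without this $w_2$ bookkeeping your displayed isomorphism is unjustified; compare the hypothesis~\eref{extlmm_e0} in Lemma~\ref{ext_lmm} and Remark~\ref{Remi_rmk}, which note that splitting conclusions of this kind genuinely fail in some such situations.

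The second unproven pillar is the ``scalar comparison'' $w_1(\det\dbar_{\C})=\mathfrak{f}^*w_1(\cM_{g,l}^{\si})$, which you defer to a direct comparison of mapping class group actions. This is Proposition~\ref{prp_domains} of the paper, and its proof is not a formality: it identifies $T\cM_g^{\si}$ with the dual of $\ker\dbar_{(T^*\Si,\tnd\si^*)^{\otimes2}}$ via Kodaira--Spencer and Serre duality, and then applies the \emph{same} splitting Proposition~\ref{bndlsplit_prp} to $(W,\ti\phi)\oplus(W,\ti\phi)\oplus(W,\ti\phi)^{*\otimes2}$ for $W$ the tangent pair over the mapping torus -- so the step you leave as an obstacle requires exactly the machinery you have not supplied. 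Your own closing paragraph concedes that these two points are ``the technical heart''; since they constitute everything that distinguishes this theorem from its genus $0$ and $1$ predecessors, what you have is an accurate outline of the paper's strategy, not a proof. (Minor: your square-root trick for the cases $(g,l)=(2,0),(3,0)$ in part~(1) is likewise unsubstantiated; the paper instead proves the statement for $l$ large and descends via the forgetful map, whose vertical bundle is orientable because elements of $\cD_{\si}$ preserve the orientation of~$\Si$.)
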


\noindent
The genus~0 and~1 analogues of Theorem~\ref{thm_maps} are essentially 
\cite[Theorems~1.1,~1.2]{GZ}, respectively;
less general versions of \cite[Theorem~1.1]{GZ} are contained 
in \cite[Theorem~1.1]{FOOO9} and \cite[Theorem~1.3]{Teh}.
The second requirement in~\eref{realorient_e}  is not necessary
for the conclusion of Theorem~\ref{thm_maps} if $\Si\!-\!\Si^{\si}$
is disconnected; see \cite[Theorem~1.4]{GZ2}.
We note that Definition~\ref{realorient_dfn} forces the fixed locus $X^{\phi}$ 
to be orientable; so Theorem~\ref{thm_maps} does not consider any situations with
unorientable Lagrangians.\\

\noindent
Under the assumptions on $(X,\om,\phi)$ in Theorem~\ref{thm_maps}\ref{it_maps1},
an orientation on $\fM_{g,l}(X,B;J)^{\phi,\si}$ can be specified as follows.
Fix $\fJ\!\in\!\cJ_{\si}$ and choose an orientation of the tangent 
space of the Deligne-Mumford moduli space~$\cM_{g,0}^{\si}$ at~$\fJ$.
Let $[\Si,X]_B^{\phi,\si}$ denote the set of homotopy classes of real maps 
from $(\Si,\si)$ to~$(X,\phi)$ so that $u_*[\Si]\!=\!B$.
The group~$\cD^{\si}$ of orientation-preserving diffeomorphisms of~$\Si$ commuting with~$\si$
acts on $[\Si,X]_B^{\phi,\si}$ by composition on the right.
For each coset of this group action, 
choose a representative~$u_i$ and an orientation of 
the index of a linearization of a real Cauchy-Riemann operator in~$u_i^*TX$
compatible with~$\fJ$.
The latter can be obtained by choosing a spin structure on the real vector bundle
$$TX^{\phi}\oplus L^{\ti\phi}\oplus L^{\ti\phi}\lra X^{\phi}\,,$$
fixing the second identification in~\eref{realorient_e}, 
and choosing a symmetric half-surface~$\Si^b$, or a surface with crosscaps, 
doubling to~$\Si$;
see Section~\ref{prelim_sec} and \cite[Section~4]{GZ}.
The resulting orientation on $\fM_{g,l}(X,B;J)^{\phi,\si}$ does not depend
on~$J$.
However, if $g\!\ge\!2$, it does depend on
the choice of~$\Si^b$ for most topological types of orientation-reversing
involutions~$\si$ on~$\Si$.
This dependence is suggested by Example~\ref{half_eg} and illustrates
the significance of maps 
with crosscaps in real Gromov-Witten theory in positive genera;
the mathematical construction of moduli spaces of such maps in~\cite{GZ}
is motivated by their role in 
the description of localization data for real GW-invariants
of the quintic threefold in~\cite{Wal}.

\begin{thm}\label{thm_inv} 
Let $(X,\om,\phi)$ be a real symplectic $2n$-manifold,
$g,l\!\in\!\Z^{\ge0}$, $B\!\in\!H_2(X;\Z)$, and $J\!\in\!\cJ_{\phi}$.
If $n\!\not\in\!2\Z$, $X^{\phi}\!=\!\eset$, and 
$$\La_{\C}^{\top}(TX,\tnd\phi)=(L,\ti\phi)^{\otimes 2}$$
for some rank~1 real bundle pair $(L,\ti\phi)\!\lra\!(X,\phi)$,
then the moduli space $\fM_{g,l}(X,B;J)^{\phi,\si}$ carries a virtual fundamental class
and thus gives rise to real genus~$g$ Gromov-Witten invariants of $(X,\om,\phi)$.
\end{thm}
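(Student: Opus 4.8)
The plan is to derive Theorem~\ref{thm_inv} from Theorem~\ref{thm_maps}, exploiting the fact that the hypothesis $X^\phi=\emptyset$ simultaneously disposes of the two obstructions to defining real Gromov-Witten invariants that are emphasized in the introduction: the orientability of $\fM_{g,l}(X,B;J)^{\phi,\si}$ and the presence of real codimension-one boundary strata in its compactification. I would first observe that $X^\phi=\emptyset$ renders the first requirement in~\eref{realorient_e} vacuous: the identity $w_2(TX^\phi)=w_1(L^{\ti\phi})^2$ is an equality of classes on $X^\phi$, and there are none to check when $X^\phi=\emptyset$. Together with the assumed rank~1 real bundle pair $(L,\ti\phi)$ satisfying $\La_\C^{\top}(TX,\tnd\phi)=(L,\ti\phi)^{\otimes 2}$, this shows that $(X,\om,\phi)$ is real-orientable in the sense of Definition~\ref{realorient_dfn}. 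Since $n\!\not\in\!2\Z$, Theorem~\ref{thm_maps}\ref{it_maps1} then yields orientability of $\fM_{g,l}(X,B;J)^{\phi,\si}$ for every $g\!\ge\!2$, while the genus~$0$ and genus~$1$ cases follow from the analogous statements \cite[Theorems~1.1,~1.2]{GZ} under the same real-orientability input; thus the moduli space is orientable in all genera.

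Next I would show that $X^\phi=\emptyset$ eliminates the codimension-one boundary from the Gromov compactification. The key point is that every real map $u\!:(\Si,\si)\!\lra\!(X,\phi)$ restricts to a map $\Si^\si\!\lra\!X^\phi$, so $X^\phi=\emptyset$ forces $\Si^\si=\emptyset$; in particular only the fixed-point-free topological types of~$\si$ carry non-empty moduli spaces, and for the remaining types the space is empty and carries the zero virtual class trivially. Passing to a stable limit, a node fixed by the domain involution would be a point of the fixed locus and hence would be sent into $X^\phi$ by the real limit map, which is impossible. Therefore every nodal real map appearing in the compactification has its nodes occurring in conjugate pairs interchanged by the involution. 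Smoothing such a conjugate pair is governed by a single complex parameter and is therefore of real codimension~$2$, so the compactified moduli space has no codimension-one strata.

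Finally, with orientability and the absence of codimension-one boundary established, I would invoke the virtual fundamental class machinery of~\cite{RT, FO, LT} in the real form of \cite[Section~2]{Ge2}: Gromov compactness provides a compact moduli space, the orientation constructed above orients the virtual obstruction theory over~$\Z$, and the absence of codimension-one boundary guarantees that the resulting virtual class is closed and hence produces invariants independent of the auxiliary choices. This yields real genus~$g$ Gromov-Witten invariants of $(X,\om,\phi)$. The crux of the argument is the orientability input, which is precisely the content of Theorem~\ref{thm_maps} and its low-genus counterparts in~\cite{GZ}; once orientability is granted, the geometric role of $X^\phi=\emptyset$ in ruling out $\si$-fixed nodes is clean, and the passage to a virtual fundamental class is then routine. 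Accordingly, I expect the only genuine subtlety to be bookkeeping the dependence of the orientation on the choice of doubling surface~$\Si^b$ noted after Theorem~\ref{thm_maps}, which affects the sign of the invariant but not its existence.
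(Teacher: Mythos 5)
Your proposal is correct and follows essentially the same route as the paper's own argument: real-orientability is automatic since $X^{\phi}\!=\!\eset$ renders the $w_2$ condition in Definition~\ref{realorient_dfn} vacuous, orientability then comes from Theorem~\ref{thm_maps} for $g\!\ge\!2$ and \cite[Theorems~1.1,~1.2]{GZ} in genera~$0$ and~$1$, and the same hypothesis both restricts to fixed-point-free involutions~$\si$ and empties the codimension-one boundary (real nodes would have to map to~$X^{\phi}$), after which the virtual class follows from the standard adaptation of \cite{FO,LT} as in \cite[Section~7]{Sol} and \cite[Remark~3.3]{Ge2}. Your elaboration that conjugate node pairs are smoothed by a single complex parameter, hence contribute strata of real codimension two, correctly fills in a detail the paper leaves implicit.
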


\noindent
If $X^{\phi}\!=\!\eset$, $\fM_{g,l}(X,B;J)^{\phi,\si}\!=\!\eset$
for every topological type of orientation-reversing involutions~$\si$
on a genus~$g$ surface~$\Si$ except for the involutions~$\si_g$
with $\Si^{\si_g}\!=\!\eset$.
Thus,
$$\fM_{g,l}(X,B;J)^{\phi}=\fM_{g,l}(X,B;J)^{\phi,\si_g}\,.$$
By Theorem~\ref{thm_maps} and \cite[Theorems~1.1,~1.2]{GZ},
this moduli space is orientable under the assumptions of Theorem~\ref{thm_inv}.
The (virtual) codimension-one boundary of $\fM_{g,l}(X,B;J)^{\phi}$
consists of maps to~$X$ from two-component domains sending
the node to~$X^{\phi}$. 
If $X^{\phi}\!=\!\eset$, the boundary is empty and
the moduli space $\fM_{g,l}(X,B;J)^{\phi}$ with a choice of orientation
determines a virtual fundamental class,
obtained by a suitable adaptation of the usual VFC constructions of \cite{FO,LT},
as in \cite[Section~7]{Sol} and \cite[Remark~3.3]{Ge2}.

\begin{crl}\label{CIorient_crl}
Let $n\!\in\!\Z^+$, $g,k,l\!\in\!\Z^{\ge0}$, $\a\!\equiv\!(a_1,\ldots,a_k)\!\in\!(\Z^+)^k$, and 
$(\Si,\si)$ be a genus~$g$ symmetric surface.
\begin{enumerate}[label=(\arabic*),leftmargin=*]
\item If $n\!-\!k\in2\Z$,
$X_{n;\a}\!\subset\!\P^{n-1}$ is a complete intersection of multi-degree~$\a$
preserved by~$\tau_n$,
$$\sum_{i=1}^ka_i\equiv n \mod2, \qquad\hbox{and}\qquad
\sum_{i=1}^ka_i^2\equiv \sum_{i=1}^ka_i \mod4,$$
the moduli space  $\fM_{g,l}(X_{n;\a},B;J)^{\tau_{n;\a},\si}$
is orientable for every $B\!\in\!H_2(X_{n;\a};\Z)$ and $J\!\in\!\cJ_{\tau_{n;\a}}$.
\item If $k\!\in\!2\Z$, $X_{2n;\a}\!\subset\!\P^{2n-1}$ is a complete intersection of 
multi-degree~$\a$ preserved by~$\eta_{2n}$ and
$$a_1\!+\!\ldots\!+\!a_k\equiv 2n \mod4,$$
the moduli space $\fM_{g,l}(X_{2n;\a},B;J)^{\eta_{2n;\a},\si}$
carries a virtual fundamental class for every\linebreak
 \hbox{$B\!\in\!H_2(X_{2n;\a};\Z)$} 
and \hbox{$J\!\in\!\cJ_{\eta_{2n;\a}}$}
and thus gives rise to real genus~$g$ Gromov-Witten 
invariants of~$(X_{2n;\a},\om_{2n;\a},\eta_{2n;\a})$.
\end{enumerate}
\end{crl}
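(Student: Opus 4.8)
The plan is to derive both parts by checking the hypotheses of Theorems~\ref{thm_maps} and~\ref{thm_inv} for the complete intersections, the crux being a Stiefel--Whitney computation reducing the first identity in~\eref{realorient_e} to the stated congruences. First I would record the adjunction input. From the (real) Euler sequence, $\La_\C^\top(T\P^{n-1},\tnd\tau_n)=(\cO(n),\cdot)$ as real bundle pairs over $(\P^{n-1},\tau_n)$, where $\cO(d)$ carries the conjugation induced by $\tau_n$, and similarly for $\eta_{2n}$ with the caveat below. Since the normal bundle of $X_{n;\a}$ in $\P^{n-1}$ is $\bigoplus_{i=1}^k\cO(a_i)|_{X_{n;\a}}$ with its induced conjugation, adjunction gives
$$\La_\C^\top(TX_{n;\a},\tnd\tau_{n;\a})=\big(\cO(n-\textstyle\sum_i a_i),\cdot\big)\big|_{X_{n;\a}}$$
as real bundle pairs, and likewise for $\eta_{2n;\a}$ with $n$ replaced by $2n$. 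The candidate is then $L=\cO(d)|_X$ with $2d=n-\sum_i a_i$ (resp. $2d=2n-\sum_i a_i$), so the second identity in~\eref{realorient_e} holds as soon as $d$ is an integer and the conjugation on $\cO(d)$ squares to $+\id$.

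For part~(1), with $\phi=\tau_{n;\a}$, the hypothesis $n-k\!\in\!2\Z$ makes $\dim_\C X_{n;\a}=n-1-k$ odd, so Theorem~\ref{thm_maps}\ref{it_maps1} will apply once real-orientability is established; the congruence $\sum_i a_i\equiv n\pmod2$ makes $d=(n-\sum_i a_i)/2$ an integer, and for $\tau_n$ the conjugation on $\cO(d)$ automatically squares to $+\id$ (its fixed locus is nonempty). It then remains to verify $w_2(TX^{\phi})=w_1(L^{\ti\phi})^2$ in $H^2(X^{\phi};\Z/2)$. Writing $a$ for the restriction to $X^{\phi}$ of the generator of $H^1(\R\P^{n-1};\Z/2)$, the real Euler sequence gives $w(T\R\P^{n-1})=(1+a)^n$, while the real normal bundle $\bigoplus_i\cO(a_i)^{\ti\phi}|_{X^{\phi}}$ has total class $(1+a)^m$ with $m=\#\{i:a_i\text{ odd}\}$; the Whitney formula yields $w(TX^{\phi})=(1+a)^{n-m}$ and hence $w_2(TX^{\phi})=\binom{n-m}{2}a^2$. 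Since $w_1(L^{\ti\phi})=d\,a$, one has $w_1(L^{\ti\phi})^2=d\,a^2\pmod2$. Using that $n-m$ is even (forced by the first congruence), a short computation shows $\binom{n-m}{2}\equiv d\pmod2$ is equivalent to $\sum_i a_i-m\equiv0\pmod4$; and term by term $a_i-[a_i\text{ odd}]\equiv a_i(a_i-1)\pmod4$, so this is exactly $\sum_i a_i^2\equiv\sum_i a_i\pmod4$. Thus $X_{n;\a}$ is real-orientable and Theorem~\ref{thm_maps}\ref{it_maps1} gives orientability.

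For part~(2), the involution $\eta_{2n}$ is fixed-point free, so $X^{\phi}=\eset$ and the $w_2$-condition is vacuous; the hypothesis $k\!\in\!2\Z$ makes $\dim_\C X_{2n;\a}=2n-1-k$ odd, matching the parity requirement of Theorem~\ref{thm_inv}. The subtlety here is that $\cO(1)$ admits only a quaternionic lift $\ti\eta$ of $\eta_{2n}$, with $\ti\eta\circ\ti\eta=-\id$, so the induced conjugation on $\cO(d)$ squares to $(-1)^d\id$ and $\cO(d)$ is a genuine real bundle pair precisely when $d$ is even. For $d=(2n-\sum_i a_i)/2$ this holds iff $\sum_i a_i\equiv2n\pmod4$, the stated congruence, which supplies the rank~1 real bundle pair $L$ with $L^{\otimes2}=\La_\C^\top(TX_{2n;\a},\tnd\eta_{2n;\a})$; Theorem~\ref{thm_inv} then produces the virtual fundamental class and the invariants. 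The main obstacle throughout is the Stiefel--Whitney computation of part~(1), together with carefully tracking the real-versus-quaternionic nature of the conjugation on $\cO(d)$ so that the second identity in~\eref{realorient_e} holds as real bundle pairs and not merely as complex bundles.
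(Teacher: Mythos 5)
Your proposal is correct and follows essentially the same route as the paper: lift the involutions to $\cO(1)$ (resp.\ to $2\cO_{\P^{2n-1}}(1)$, since the lift of $\eta_{2n}$ to $\cO(1)$ is quaternionic), use the Euler and adjunction sequences as sequences of \emph{real bundle pairs} to exhibit $\La_{\C}^{\top}(TX,\tnd\phi)$ as the square of a rank~1 real bundle pair, verify the $w_2$ condition on the real locus, and invoke Theorems~\ref{thm_maps} and~\ref{thm_inv}. Your Stiefel--Whitney computation is a correct variant of the paper's: you cancel the normal contribution first to get $w_2(TX^{\phi})=\binom{n-m}{2}a^2$ with $m$ the number of odd $a_i$, whereas the paper writes $w_2(X_{n;\a}^{\tau_n})=\bigl(\binom{n}{2}-\sum_{i<j}a_ia_j\bigr)x^2$ and matches it to $\bigl(\frac{n-|\a|}{2}\bigr)^2x^2$; the two reductions to $\sum a_i^2\equiv\sum a_i\bmod 4$ are equivalent, and your identity $a_i-[a_i~\mathrm{odd}]\equiv a_i(a_i-1)\bmod 4$ checks out. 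Likewise your parity analysis of the quaternionic lift in part~(2) matches the paper's use of $\bigl(\La_{\C}^{\top}(\cO_{\P^{2n-1}}(2),\ti\eta_{2n})\bigr)^{\otimes((2n-|\a|)/4)}$, which is exactly $\cO(d)$ with $d=(2n-|\a|)/2$ even.

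The one genuine omission is the low-genus case of part~(1): the corollary allows all $g\in\Z^{\ge0}$, but Theorem~\ref{thm_maps}\ref{it_maps1} is stated only for $g\ge2$, so your concluding appeal to it does not cover $g=0,1$. The paper closes this by citing the genus~0 and~1 analogues \cite[Theorems~1.1,~1.2]{GZ2} (see also \cite{GZ}); with that citation added, your argument is complete. (Part~(2) is unaffected, since Theorem~\ref{thm_inv} is stated for all $g\in\Z^{\ge0}$.) A smaller stylistic point: in part~(2) you should make explicit, as the paper's footnote does, that the odd degrees $a_i$ occur in pairs, so that the normal bundle genuinely carries a conjugation lifting $\eta_{2n}$ as a real bundle pair; your "likewise for $\eta_{2n;\a}$" glosses over this, though your subsequent determinant-level argument does not actually depend on it.
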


\noindent
In the genus~0 case, the conclusions of Corollary~\ref{CIorient_crl}
apply without the parity assumptions on~$k$ (which correspond to 
the dimension of~$X_{n;\a}$ being odd).\\

\noindent
Let $X_{n;\de}\!\subset\!\P^{n-1}$ denote a hypersurface of degree $\de\!\in\!\Z^+$ 
preserved by~$\tau_n$.
Theorem~\ref{thm_maps} applies to~$X_{n;\de}$ if
$$\de=0,1\mod\,4\qquad\hbox{and}\qquad \de\equiv n\mod2.$$
With the second condition strengthened to $\de\!\equiv\!n\mod4$,
the conclusion of Theorem~\ref{thm_maps} is obtained in \cite[Corollaire 2.4]{Remi}
under the additional assumption that $\Si^{\si}$ is a single circle;
if $\Si^{\si}$ consists of more than one circle, 
\cite[Corollaire~2.4]{Remi} shows that the conclusion of  Theorem~\ref{thm_maps}
holds after pulling back to a cover of $\fM_g(X,B;J)^{\phi,\si}$.\\

\noindent
By \cite[Lemma~3.1]{Teh}, the canonical line bundle~$K_X$ of
a Kahler Calabi-Yau manifold~$X$, i.e.~$K_X$ is trivial as a holomorphic line bundle, 
is trivial as a rank~1 real bundle pair with respect to any involution~$\phi$ 
which is anti-holomorphic with respect to the Kahler complex structure.
As noted in \cite[Section~2.1]{Teh},
the canonical line bundle~$K_X$ of a simply connected symplectic Calabi-Yau manifold~$(X,\om)$,
i.e.~$c_1(X,\om)\!=\!0$, is trivial with respect to any anti-symplectic involution~$\phi$
on~$X$.
Thus, Theorems~\ref{thm_maps} and~\ref{thm_inv} also apply to Kahler Calabi-Yau manifolds~$X$
with an  anti-holomorphic involution and to 
simply connected symplectic Calabi-Yau manifolds with any anti-symplectic involution,
provided the fixed locus is spin, i.e.~$w_2(X^{\phi})\!=\!0$, in the case of 
Theorem~\ref{thm_maps} and empty in the case of Theorem~\ref{thm_inv}.\\

\noindent
Theorem~\ref{thm_maps} and Corollary~\ref{CIorient_crl} can be extended to 
the moduli spaces $\fM_{g,k,l}(X,B;J)^{\phi,\si}$ of 
real maps with $k$~boundary and $l$~interior marked points,
as the effect of adding marked points on the sign of the relevant automorphisms
can be easily determined.
The moduli spaces $\fM_{g,k,l}(X,B;J)^{\phi,\si}$ typically have codimension-one boundary
and often of more than one type.
The codimension-one boundary stratum consisting of maps from $ \Si$ with
a bubble attached at a real point of the domain can be eliminated by the gluing 
procedure of \cite{Cho,Sol}, which is adapted to maps with decorated 
marked points in \cite[Section~3]{Ge2}.
By \cite[Theorems~1.3]{Ge2}, the proof of \cite[Corollary~6.1]{Ge2}, 
and \cite[Propositions~4.1,~4.2]{GZ2},
Theorem~\ref{thm_maps} can be extended to
the glued moduli space $\wt\fM_{g,0,l}(X,B;J)^{\phi,\si}$.
The remaining types of codimension-one boundary strata of $\fM_{g,0,l}(X,B;J)^{\phi,\si}$
correspond to one-nodal degenerations of~$\Si$ passing between
involutions on~$\Si$ of different topological types, 
as described in detail in  \cite[Section~4]{Klein}, 
\cite[Section~5]{Se91}, and~\cite[Sections~3,4]{Melissa}.
As suggested in \cite[Section~1.5]{PSW} and carried out in \cite[Section~3]{Teh} 
in the case $\Si\!=\!\P^1$, the moduli spaces $\ov\fM_{g,0,l}(X,B;J)^{\phi,\si}$
with different types of involutions~$\si$ on~$ \Si$ should in general be combined
to get well-defined invariants by gluing along codimension-one boundaries.
We intend to consider this question in a future paper.\\

\noindent
In Section~\ref{HurCov_sec}, we apply Theorem~\ref{thm_maps} to show
that some moduli spaces of real Hurwitz covers are orientable.
We also establish the orientability of such moduli spaces 
directly in some cases with the target $(X,\phi)\!=\!(\P^1,\eta)$;
Theorem~\ref{thm_maps} does not apply to these cases as
the rank~1 real bundle pair $(T\P^1,\tnd\eta)$ does not admit a real
square root.
This suggests that perhaps the second requirement in~\eref{realorient_e}
can be replaced by the requirement that the equivariant~$w_2$ of 
$\La_{\C}^{\top}(TX,\tnd\phi)$ be a square class;
by \cite[Corollary~3.2]{GZ2}, the latter is 
the case if the second condition in~\eref{realorient_e} holds
or if $\pi_1(X)\!=\!0$ and $w_2(TX)\!=\!0$.
By \cite[Corollary~1.6]{GZ}, the $w_2$ requirement suffices whenever 
the domain of the maps is~$\P^1$.
On the other hand, the orientability problem for the 
moduli spaces of real Hurwitz covers appears to 
be a purely combinatorial question about the topology of various Hurwitz covers
and can perhaps be addressed by more classical methods.\\

\noindent
The typical approaches to the orientability problem in real Gromov-Witten
theory involve computing the signs of the actions of appropriate diffeomorphisms
on determinant lines of real Cauchy-Riemann operators over some coverings of
$\fM_g(X,B;J)^{\phi,\si}$, as done in \cite{Sol, FOOO9, Remi, GZ2}.
These approaches work as long as the relevant diffeomorphisms are homotopically fairly simple
and in particular preserve a bordered surface in~$\Si$ that doubles to~$\Si$
or map it to its conjugate half;
more general diffeomorphisms are considered in~\cite{Remi}.
In contrast to~\cite{Sol, FOOO9}, in~\cite{GZ2} we allowed the complex structure
on a bordered domain to vary and considered diffeomorphisms that interchange
the boundary components.
We discovered~that 
\begin{enumerate}[label=(\arabic*),leftmargin=*]
\item these diffeomorphisms act with the same signs~on a natural cover of $\cM_g^{\si}$ 
and on the determinant line bundle for the trivial rank~1 real bundle pair over~it;
\item the signs for the square of a rank~1 real bundle pair are often the same as
for the rank~1 trivial real bundle pair; 
\item the signs for a real bundle pair and its top exterior power are often 
related just by the parity of the rank of the bundle pair;
\end{enumerate}
see Corollary~2.2 and Propositions~4.1 and~4.2 in~\cite{GZ2}.
In this paper, we show that suitable interpretations of these statements
apply to arbitrary real diffeomorphisms of the closed surface.
Proposition~\ref{prp_domains}, which appears to be of its own interest, 
establishes~(1) in the general case.
Proposition~\ref{bndlsplit_prp} captures the phenomena~(2) and~(3)
for arbitrary diffeomorphisms and is used in the proof of 
Proposition~\ref{prp_domains}.
In contrast to the typical approaches, we compare the signs directly,
instead of computing each sign separately.
In Section~\ref{mainpf_sec}, we combine Proposition~\ref{bndlsplit_prp}
and Proposition~\ref{prp_domains} in order to confirm Theorem~\ref{thm_maps}
and then deduce Corollary~\ref{CIorient_crl}.

\begin{rmk}\label{Remi_rmk}
After completing this paper, we discovered that \cite[Proposition~1.2]{Remi}
contains what can be viewed as a more general version of one of the main stepping
stones in this paper, Proposition~\ref{bndlsplit_prp0}.
The former applies in broader range of cases; while the conclusion
of our Proposition~\ref{bndlsplit_prp0} need not hold in some of these cases,
the conclusion of \cite[Proposition~1.2]{Remi} suffices for the purposes 
of~\cite{Remi} and of this paper.
In the appendix, we provide an alternative, less technical, argument for this broader
result and extend our Theorem~\ref{thm_maps} further; 
unlike the main part of this paper, this argument relies on an actual sign computation.
As explained in the appendix, the benefit of this extension is unclear to us
at this point.
\end{rmk}

\noindent
We would like to thank E.~Brugell\'e, R.~Cretois, E.~Ionel, S.~Lisi,
M.~Liu, J.~Solomon, M.~Tehrani, G.~Tian, and J.~Welschinger
for related discussions.
The second author is also grateful to the IAS School of Mathematics for its hospitality 
during the period when the results in this paper were obtained.

\section{Setup}
\label{prelim_sec}


\noindent
Let $(\Si,\si)$ be a genus~$g$ symmetric surface.
We denote by $|\si|_0\!\in\!\Z^{\ge0}$ the number of connected components of~$\Si^{\si}$;
each of them is a circle.
Let $\lr\si\!=\!0$ if the quotient $\Si/\si$ is orientable, 
i.e.~$ \Si\!-\! \Si^{\si}$ is disconnected, and $\lr\si\!=\!1$ otherwise. 
There are $\flr{\frac{3g+4}{2}}$ different topological types of orientation-reversing 
involutions $\si$ on $\Si$ classified by the triples $(g,|\si|_0,\lr\si)$; 
see \cite[Colollary~1.1]{Nat}. 
The two equivalence classes of orientation-reversing involutions on $S^2\!=\!\P^1$
are described in Example~\ref{HurCov_eg2}, while the three 
equivalence classes of orientation-reversing involutions on
$\bT\!=\!S^1\!\times\!S^1$ are described in~(2) of the proof of Theorem~\ref{HurCov_thm}.\\

\noindent
An \sf{oriented symmetric half-surface} (or simply \sf{oriented sh-surface}) 
is a pair $(\Si^b,c)$ consisting of an oriented bordered smooth surface~$\Si^b$ 
and an involution $c\!:\prt\Si^b\!\lra\!\prt\Si^b$ preserving each component
and the orientation of~$\prt\Si^b$.
The restriction of~$c$  to a boundary component 
is either the identity or the antipodal map
\BE{antip_e}\fa\!:S^1\lra S^1, \qquad z\lra -z,\EE
for a suitable identification $(\prt\Si^b)_i$ with $S^1\!\subset\!\C$;
the latter type of boundary structure is called \sf{crosscap} 
in the string theory literature.
We define
$$c_i=c|_{(\prt\Si^b)_i}, \qquad 
|c_i|= \begin{cases} 0,&\hbox{if}~c_i=\id;\\ 1,&\hbox{otherwise};\end{cases}
\qquad
|c|_k=\big|\{(\prt\Si^b)_i\!\subset\!\Si^b\!:\,|c_i|\!=\!k\}\big|\quad k=0,1.$$
Thus, $|c|_0$ is the number of standard boundary components of $(\Si^b,\prt\Si^b)$  
and $|c|_1$ is the number of crosscaps.
Up to isomorphism, each oriented sh-surface $(\Si^b,c)$ is determined by the genus~$g$ of~$\Si^b$,
the number~$|c|_0$ of ordinary boundary components, and the number~$|c|_1$ of crosscaps.
We denote by $(\Si_{g,m_0,m_1},c_{g,m_0,m_1})$ the genus~$g$ oriented sh-surface
with $|c_{g,m_0,m_1}|_0\!=\!m_0$ and $|c_{g,m_0,m_1}|_1\!=\!m_1$.\\

\noindent
An  oriented sh-surface $(\Si^b,c)$ of type $(g,m_0,m_1)$ \sf{doubles} 
to a symmetric surface~$(\Si, \si)$ of type 
$$(g(\Si), |\si|_0,\lr\si)=\begin{cases} (2g\!+\!m_0\!+\!m_1\!-\!1,m_0,0),& \text{if}~ m_1=0;\\
(2g\!+\!m_0\!+\!m_1\!-\!1,m_0,1),& \text{if}~m_1\neq 0;
\end{cases}$$
so that $\si$ restricts to~$c$ on the cutting circles (the boundary of~$\Si^b$);
see \cite[(1.6)]{GZ2}.
\begin{figure}[htb]
\begin{center}
\leavevmode
\includegraphics[width=0.6\textwidth,height=150px]{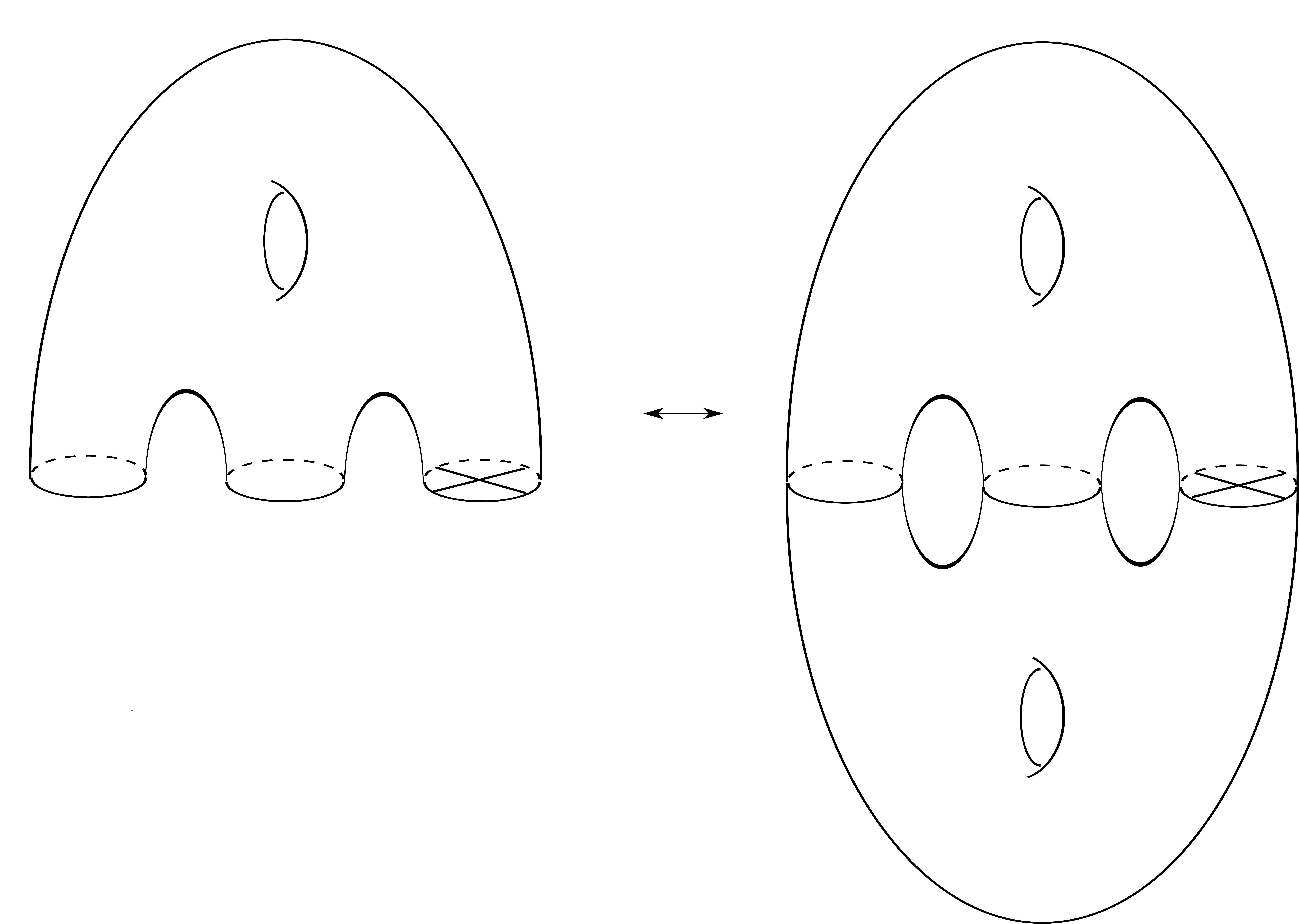}
\end{center}
\caption{ }
\end{figure}

\noindent
Since this doubling construction covers all topological types of orientation-reversing 
involutions $\si$ on $\Si$, for every symmetric surface $(\Si,\si)$ there is 
an oriented sh-surface $(\Si^b,c)$ which doubles to $(\Si,\si)$.
In general,  the topological type of such an sh-surface is not unique. 
There is a topologically unique oriented sh-surface~$(\Si^b,c)$
doubling to a symmetric surface~$( \Si,\si)$ if $\lr\si\!=\!0$, 
in which case $(\Si^b,c)$ has no crosscaps, or 
$|\si|_0\!\ge\!g( \Si)\!-\!1$,
in which case $(\Si^b,c)$ is either of genus at most~1 and has no crosscaps or
of genus~0 and has at most 2 crosscaps.\\


\noindent
A \textsf{real Cauchy-Riemann operator on a real bundle pair $(V,\ti{\si})\!\lra\!(\Si,\si)$},  
where $(\Si,\si)$ is an oriented symmetric surface, is a linear map of the~form
\BE{CRdfn_e}\begin{split}
D=\bp\!+\!A\!: \Ga(\Si;V)^{\ti{\si}}
\equiv&\big\{\xi\!\in\!\Ga(\Si;V)\!:\,\xi\!\circ\!\si\!=\!\ti{\si}\!\circ\!\xi\big\}\\
&\hspace{.1in}\lra
\Ga_{\fJ}^{0,1}(\Si;V)^{\ti\si}\equiv
\big\{\ze\!\in\!\Ga(\Si;(T^*\Si,\fJ)^{0,1}\!\otimes_{\C}\!V)\!:\,
\ze\!\circ\!\tnd\si=\ti\si\!\circ\!\ze\big\},
\end{split}\EE
where $\bp$ is the holomorphic $\bp$-operator for some $\fJ\!\in\!\cJ_\si $
and a holomorphic structure in~$V$ and  
$$A\in\Ga\big(\Si;\Hom_{\R}(V,(T^*\Si,\fJ)^{0,1}\!\otimes_{\C}\!V) \big)^{\ti\si}$$ 
is a zeroth-order deformation term.  
A real Cauchy-Riemann operator on a real bundle pair
is Fredholm in the appropriate completions.
A continuous family of such Fredholm operators $D_t$ over a topological space~$\cH$  
determines a line bundle over~$\cH$, called \sf{the determinant line bundle of~$\{D_t\}$}
and denoted $\det D$;
see \cite[Section~A.2]{MS} and \cite{detLB} for a construction. More specifically, 
if $X,Y$ are Banach spaces and $D:X\rightarrow Y$ is a Fredholm operator, the
\textsf{determinant line} of $D$ is defined as 
$$\det (D)\equiv\La_{\R}^{\top}\ker(D) \otimes \big(\La^{\top}_{\R}\text{cok}(D)\big)^*.$$
A short exact sequence of Fredholm operators
\[\begin{CD}
0
@>>>X'@>>>X@>>>X''@>>>0 \\
@. @V V D' V@VV D V@VV D'' V@.\\
0@>>> Y'@>>>Y@>>>Y''@>>>0
\end{CD}\]
determines a canonical isomorphism
\BE{sum} \det (D)\cong \det (D')\otimes \det (D'').\EE
For a continuous family of Fredholm operators $D_t:X_t\ri Y_t$  parametrized by
a topological space $\cH$, the determinant lines $\det(D_t)$ form a line bundle
over~$\cH$. For a short exact sequence of such
families, the isomorphisms~(\ref{sum}) give rise to a canonical isomorphism
between determinant line bundles. 

\begin{rmk}\label{fam_rem}
Families of real Cauchy-Riemann operators often arise by pulling back data from
a target manifold by smooth maps as follows. 
Suppose $(X,\phi,J)$ is an almost complex manifold with an anti-complex  involution
$\phi\!:X\!\lra\!X$ and $(V,\ti\phi)\!\lra\!(X,\phi)$ is a real bundle pair.
Let $\na$ be a connection in $V$ and 
$$A\in\Ga\big(X;\Hom_{\R}(V,(T^*X,J)^{0,1}\otimes_{\C}\!V)\big)^{\ti\phi}.$$ 
For any real map $u\!:(\Si,\si)\!\lra\!(X,\phi)$ and $\fJ\!\in\!\cJ_{\si}$, 
let $\na^u$ denote the induced connection in $u^*V$ and
$$ A_{\fJ;u}=A\circ \prt_{\fJ} u\in\Ga(\Si;
\Hom_{\R}(u^*V,(T^*\Si,\fJ)^{0,1}\otimes_{\C}u^*V)\big)^{u^*\ti\phi}.$$
The homomorphisms
$$\bp_u^\na =\frac{1}{2}(\na^u+\fI\circ\na^u\circ\fJ), \,\,
D_u\equiv \bp_u^\na\!+\!A_{\fJ;u}\!: \Ga(\Si;u^*V)^{u^*\ti\phi}\lra
\Ga^{0,1}_{\fJ}(\Si;u^*V)^{u^*\ti\phi}$$
are real Cauchy-Riemann operators on $u^*(V,\ti\phi)\!\lra\!(\Si,\si)$
that form families of real Cauchy-Riemann operators over families of maps. We denote the determinant line bundle of such a family by~$\det D_{(V,\ti\phi)}$.
\end{rmk}

\noindent
Denote by $\cD_\si$ the group of orientation preserving diffeomorphisms of~$\Si$  
 commuting with the involution~$\si$.
If $(X,\phi)$ is a smooth manifold with an involution, $l\!\in\!\Z^{\ge0}$,
and $B\in H_2(X;\Z)$, let  
$$\fB_{g,l}(X,B)^{\phi,\si}\subset \fB_g(X)^{\phi,\si}\times \Si^{2l}$$
denote the space of real maps  $u\!:(\Si,\si)\!\lra\!(X,\phi)$ with $u_*[ \Si]_{\Z}=B$
and $l$ pairs of conjugate non-real marked distinct points.
We define
$$\cH_{g,l}(X,B)^{\phi,\si}=
\big(\fB_{g,l}(X,B)^{\phi,\si}\times\cJ_\si\big)/\cD_\si.$$
The action of $\cD_\si$ on $\cJ_{\Si}$ given by $h\cdot\fJ=h^*\fJ$
preserves~$\cJ_\si$; thus, the above quotient is well-defined. 
If $J$ is an almost complex structure on~$X$ such that $\phi^*J\!=\!-J$,
the moduli space of marked real $J$-holomorphic maps in the class $B\in H_2(X;\Z)$ 
is defined to be
$$\fM_{g,l}(X,J,B)^{\phi,\si}=
\big\{[u,(z_1,\si(z_1)),\ldots,(z_l,\si(z_l)),\fJ]\!\in\!\cH_{g,l}(X, B)^{\phi,\si}\!:~
\dbar_{J,\fJ}u\!=\!0\big\},
$$
 where $\dbar_{J,\fJ}$ is the usual Cauchy-Riemann operator with respect to the complex structures $J$ on $X$ and $\fJ$ on $\Si$. If $X$ is a point and $B$ is zero, we denote by
$$\cM_{g,l}^\si\equiv \fM_{g,l}(\pt,0)^{\id,\si}\equiv \cH_{g,l}(\pt, 0)^{\id,\si}$$
the moduli space of marked symmetric domains. There is a natural forgetful map
$$\mathfrak{f}:\cH_{g,l}(X,B)^{\phi,\si}\lra  \cM_{g,l}^\si.$$
The determinant line bundle of a family of real Cauchy-Riemann operators
$D_{(V,\ti\phi)}$ on 
$$\fB_{g,l}(X,B)^{\phi,\si}\times\cJ_\si$$ 
induced by a real bundle pair $(V,\wt\phi)\ri (X,\phi)$ as in Remark~\ref{fam_rem} descends 
to a line bundle over $\cH_{g,l}(X,B)^{\phi,\si}$, which we still denote by $\det D_{(V,\ti\phi)}$.

\begin{eg}\label{ex_tbdl}
If $\fc_\C$ denotes the standard conjugation on $\C$ and $(V,\wt\phi)=(\C,\fc_\C)\lra (\pt,\id)$, the induced family of operators $\dbar_\C\equiv D_{(\C,\fc_\C)}$ on $\cM_{g,l}^\si$ defines a line bundle 
$$\det \dbar_\C \lra \cM_{g,l}^\si.$$
If $(X,\phi)$ is an almost complex manifold with anti-complex involution $\phi$ and 
$$(V,\wt\phi)=(X\!\times\!\C,\phi\!\times\!\fc_\C)\lra (X,\phi),$$
then there is a canonical isomorphism 
$$\det D_{(\C,\fc_\C)}\approx\mathfrak{f}^*\dbar_\C$$
of line bundles over $\cH_{g,l}(X,B)^{\phi,\si}$.
\end{eg}

\begin{rmk}\label{mfld_rem} 
For simplicity, we will assume that the action of $\cD_\si$ has no fixed points on the relevant
subspaces of $\fB_{g,l}(X,B)^{\phi,\si}\!\times\!\cJ_\si$.
This happens for example if sufficiently many marked points are added to~$\Si$.
In more general cases,  this issue can be avoided by working with 
Prym structures on Riemann surfaces; see~\cite{Loo}.
This assumption ensures that $\fM_{g,l}(X,J,B)^{\phi,\si}$ is a manifold 
if cut out transversely and thus has a first Stiefel-Whitney class.
Alternatively, if $g\!+\!2l\!\ge\!4$, the subspace of $\cM_{g,l}^\si$ 
consisting of $(\Si,\fJ)$ with non-trivial automorphisms is of codimension at least~2,
and so $\fM_{g,l}(X,J,B)^{\phi,\si}$ is a manifold outside of subspaces of codimension~2 
if cut out transversely and thus again  has a first Stiefel-Whitney class.
\end{rmk}

\noindent 
The following example shows that the orientability of a moduli space of symmetric half-surfaces is not sufficient for the orientability of the corresponding component of the moduli space of symmetric doubles.

\begin{figure}[htb]
\begin{center}
\leavevmode
\includegraphics[width=0.7\textwidth,height=150px]{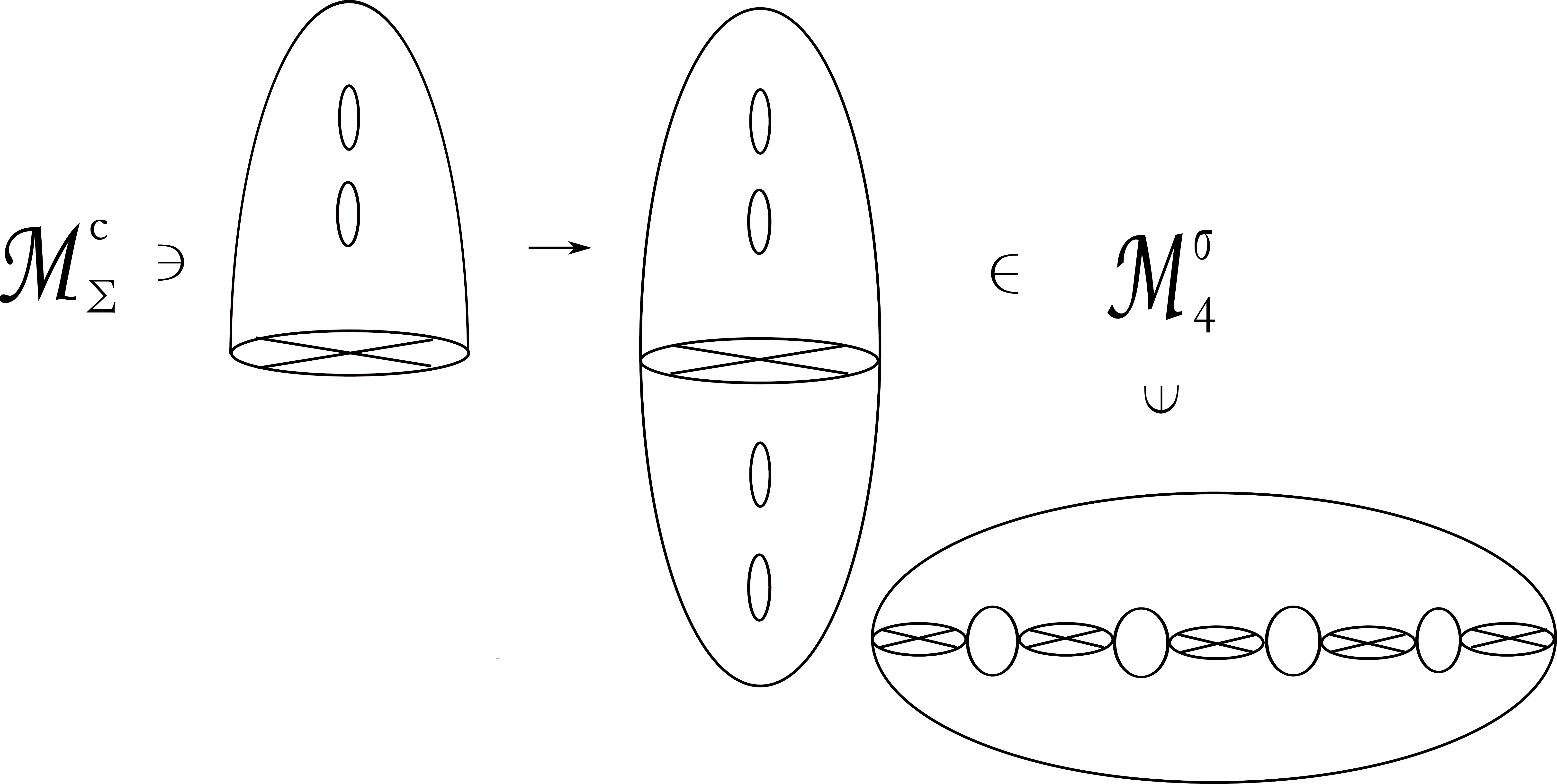}
\end{center}
\caption{  }
\label{fig:realspace}
\end{figure}

\begin{eg}\label{half_eg}
The moduli space $\cM_{\Si}^{c}$ of sh-surfaces $\Si$ of genus 2 with one boundary component and non-trivial involution (Figure \ref{fig:realspace}) is orientable by \cite[Lemma 6.1]{GZ} and \cite[Theorem]{IS}. The natural automorphisms of $\cM_{\Si}^{c}$  associated with real orientation-reversing diffeomorphisms of $\Si$ are   orientation-preserving by \cite[Corollary 2.2]{GZ2} and \cite[Lemma 6.1]{GZ}. On the natural double of $\Si$ (Figure \ref{fig:realspace}), which is a symmetric surface of genus 4 with an involution $\si$ without fixed locus, these diffeomorphisms correspond to flipping the surface across the crosscap. The real moduli space~$\cM^\si_4$, parametrizing such symmetric surfaces, is not orientable. A particular loop supporting its first Stiefel-Whitney class can be described as follows. By \cite[Theorem 1.2]{Nat}, every representative of a point in~$\cM^\si_4$ has 5 invariant circles which separate the surface. There is a real diffeomorphism~$h$ which fixes 3 of these circles and interchanges the other 2. By \cite[Corollary 2.2]{GZ2}, the mapping torus of $h$ defines a loop in $\cM^\si_4$ which pairs non-trivially with the first Stiefel-Whitney class of the moduli space. 

\end{eg}

\section{Topological preliminaries}
\label{signcomp_sec}

\noindent
In this section, we establish Proposition~\ref{bndlsplit_prp},
which relates the determinant lines of real Cauchy-Riemann operators 
on a real bundle pair and on its top exterior power.
This is the key statement used in the proof of Theorem~\ref{thm_maps}.

\begin{lmm}\label{lmm_repar}
Let $(\Si,\si)$ be a symmetric surface with fixed components $\Si^\si_1,\ldots,\Si^\si_m$ 
and $n\!\in\!\Z^+$ with $n\!\ge\!3$.
For every $a\!\in\!\pi_1(\tO(n))=\Z_2\!\times\!\Z_2$ and $i=1,\ldots,m$, 
there is a map $\psi_i\!:\Si\!\lra\!\U(n)$ such~that
\begin{enumerate}[label=$\bullet$,leftmargin=*]
\item $\psi(z)=\ov{\psi(z)}$,
\item $\psi$ is the identity outside of a small neighborhood of $\Si^\si_i$, and
\item $\psi_{|\Si^\si_i}=a\in \pi_1(\tO(n))$.
\end{enumerate}
\end{lmm}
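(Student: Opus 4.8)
The plan is to reduce the statement to the single fact that every loop in $\tO(n)$ becomes nullhomotopic once included into $\U(n)$, and then to spread such a nullhomotopy across a collar of $\Si^\si_i$ in a $\si$-equivariant fashion. I read the first bullet as the reality condition $\psi_i\!\circ\!\si\!=\!\ov{\psi_i}$, which in particular forces $\psi_i$ to be $\tO(n)$-valued along the fixed locus; thus $\psi_i|_{\Si^\si_i}$ is genuinely a loop in $\tO(n)$ and the prescribed datum $a$ is its free homotopy class. Here ``$\pi_1(\tO(n))=\Z_2\!\times\!\Z_2$'' should be understood as the set of \emph{free} homotopy classes of loops in $\tO(n)$: for $n\!\ge\!3$ one has $\pi_0(\tO(n))=\Z_2$, detected by $\det_{\R}$, and $\pi_1(\SO(n))=\Z_2$, and since the latter is abelian these combine to $\Z_2\!\times\!\Z_2$.

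First I would fix the local model near $\Si^\si_i$. Since $\si$ is an orientation-reversing involution fixing the circle $\Si^\si_i$ pointwise, $\tnd\si$ acts as $-\id$ on the normal bundle $\nu$; because $T\Si|_{\Si^\si_i}$ and $T\Si^\si_i$ are orientable, so is the line bundle $\nu$, hence $\nu$ is trivial. The equivariant tubular neighborhood theorem then supplies a $\si$-invariant annulus $A_i\!\cong\!S^1\!\times\!(-1,1)$ with $\Si^\si_i\!=\!S^1\!\times\!\{0\}$ and $\si(\th,t)\!=\!(\th,-t)$. I will build $\psi_i$ supported in $A_i$ and extend it by $\id$ over $\Si\!\setminus\!A_i$. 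For each class $a$ I would pick an explicit representative $\gm_a\!:S^1\!\lra\!\tO(n)$: the constant loops $I$ and $\tn{diag}(-1,1,\ldots,1)$ for the two values of the $\det_{\R}$-component, and the rotation loop $\th\!\mapsto\!R_{12}(\th)\!\oplus\!I_{n-2}$ (and its product with the reflection) for the generator of $\pi_1(\SO(n))$, where $R_{12}(\th)$ rotates the first coordinate plane by $\th\!\in\![0,2\pi]$.

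The key step is that each such $\gm_a$ is freely nullhomotopic in $\U(n)$: since $\gm_a$ is $\tO(n)$-valued, $\det_{\C}\circ\,\gm_a=\det_{\R}\circ\,\gm_a$ takes values in $\{\pm1\}$ and is therefore constant, so its winding number vanishes; as $\pi_1(\U(n))\cong\Z$ is detected by this winding number, $\gm_a$ contracts in $\U(n)$. I would then choose a smooth homotopy $h\!:S^1\!\times\![0,1]\!\lra\!\U(n)$ with $h(\cdot,s)\!=\!\gm_a$ for $s$ near $0$ and $h(\cdot,s)\!=\!I$ for $s$ near $1$. Finally I set $\psi_i(\th,t)\!=\!h(\th,t)$ for $t\!\ge\!0$ and $\psi_i(\th,t)\!=\!\ov{h(\th,-t)}$ for $t\!\le\!0$; the two formulas agree at $t\!=\!0$ because $\gm_a$ is real-valued, the assignment satisfies $\psi_i(\th,-t)\!=\!\ov{\psi_i(\th,t)}$ by construction, and $\psi_i\!\equiv\!I$ near $|t|\!=\!1$, so it glues to the identity on $\Si\!\setminus\!A_i$. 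This produces a real map $\Si\!\lra\!\U(n)$ meeting all three requirements.

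The only genuine obstacle is the contraction step: over $\tO(n)$ the classes $a$ are obstructed (that is exactly what makes them nontrivial), and the whole content of the lemma is that passing to $\U(n)$ kills them, so that the collar interpolation can be carried out; everything else is routine equivariant bookkeeping, including the mild smoothness matching at $t\!=\!0$ (arranged by taking $h$ constant in $s$ near the two ends).
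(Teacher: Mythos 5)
Your proof is correct and takes essentially the same approach as the paper's: contract the $\tO(n)$-valued loop inside $\U(n)$ and spread the resulting nullhomotopy over a $\si$-equivariant collar of $\Si^\si_i$, defining the map by complex conjugation on the reflected side and extending by the identity elsewhere. Your determinant/winding-number argument for the contraction (which cleanly handles loops in both components of $\tO(n)$, matching the free-homotopy reading of $\pi_1(\tO(n))=\Z_2\!\times\!\Z_2$) and your justification of the equivariant annulus simply make explicit what the paper asserts via the triviality of $\pi_1(\tO(n))\!\lra\!\pi_1(\U(n))$ and the choice of an equivariant parametrization.
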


\begin{proof}
Let $S^1\!\times\!(-2,2)$ be a $\si$-equivariant parametrization of a neighborhood~$U$ 
of $\Si^\si_i$ with $S^1\!\times\!0$ corresponding to~$\Si^\si_i$. 
Since the homomorphism $\pi_1(\tO(n))\!\lra\!\pi_1(\U(n))$ induced by the inclusion
is trivial, we can homotope~$a$ to the identity-valued constant map through 
maps $h_t\!:S^1\!\lra\!\U(n)$. 
We define~$\psi$ on~$U$ by   
$$\psi(\th,t)=
\begin{cases} 
h_t(\th), &\text{if}~ t\in[0,1];\\
I_n, &\text{if}~t\in[1,2);\\
\ov{\psi(\th,-t)},&\text{if}~t\in(-2,0];
\end{cases}$$
and extend it as the identity-valued constant map over $\Si\!-\!U$.
\end{proof}

\noindent
If $( \Si,\si)$ is  a symmetric surface and
$G\!:( \Si,\si)\!\lra\!( \Si,\si)$  is a real  diffeomorphism,
we define the mapping cylinder~$(M_G,\si_G)$ of $G$~by 
\begin{alignat*}{2}
M_G&=\bI\!\times\! \Si/\!\!\sim, &\qquad (1,z)&\sim\big(0,G(z)\big)
~~\forall\,z\!\in\! \Si,\\
\si_G\!:M_G&\lra M_G, &\qquad [s,z]&\lra\big[s,\si(z)\big]~~
\forall\,(s,z)\!\in\!\bI\!\times\! \Si.
\end{alignat*}

\begin{prp}\label{bndlsplit_prp0}
Let $( \Si,\si)$ be a symmetric surface,
$G\!:( \Si,\si)\!\lra\!( \Si,\si)$ be a real orientation-preserving diffeomorphism,
and $(W,\ti\phi)$ be a rank~$n$ real bundle pair over~$(M_G,\si_G)$.
If $n\!\ge\!3$, $W^{\ti\phi}\!\lra\!M_G^{\si_G}$ is orientable,
$w_2(W^{\ti\phi})\!=\!0$, and
$c_1(W)|_{ \Si_s}\!=\!0$ for any $s\!\in\!\bI$, there is an isomorphism
$$(W,\ti\phi)  \approx \La_{\C}^{\top}(W,\ti\phi)\oplus
(n\!-\!1)(M_G\!\times\!\C,\si_G\!\times\!\fc_{\C})$$
of real bundle pairs.
\end{prp}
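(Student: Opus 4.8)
The plan is to produce $n\!-\!1$ sections $\xi_1,\ldots,\xi_{n-1}$ of $W$ over $M_G$ that are \emph{real}, i.e.~$\ti\phi\!\circ\!\xi_j\!=\!\xi_j\!\circ\!\si_G$, and pointwise linearly independent over~$\C$. Such a tuple is the same datum as an embedding of real bundle pairs $(n\!-\!1)(M_G\!\times\!\C,\si_G\!\times\!\fc_\C)\!\hookrightarrow\!(W,\ti\phi)$, because a real section corresponds to a map of pairs out of $(M_G\!\times\!\C,\si_G\!\times\!\fc_\C)$. Fixing a $\ti\phi$-invariant Hermitian metric on~$W$ (obtained by averaging any Hermitian metric under~$\ti\phi$), the orthogonal complement of the image is a $\ti\phi$-invariant complex line subbundle~$\ell$, i.e.~a rank~1 real bundle pair, and $W\!\cong\!(n\!-\!1)(M_G\!\times\!\C,\si_G\!\times\!\fc_\C)\!\oplus\!\ell$ as real bundle pairs. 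Taking top exterior powers then gives $\ell\!\cong\!\La_\C^\top(W,\ti\phi)$, which is exactly the claimed isomorphism. Thus everything reduces to the existence of a real complex $(n\!-\!1)$-frame of~$W$.

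Over the fixed locus such a frame is forced to be genuinely real, and this is where the Stiefel--Whitney hypotheses enter. Note that $M_G^{\si_G}$ is the mapping torus of $G|_{\Si^\si}$, hence a closed surface (a disjoint union of tori and Klein bottles, one per orbit of~$G$ on the circles $\Si^\si_1,\ldots,\Si^\si_m$), and over a fixed point a real section of $W$ is a section of the rank~$n$ real vector bundle $W^{\ti\phi}\!\lra\!M_G^{\si_G}$. A real $(n\!-\!1)$-frame of $W^{\ti\phi}$ is a section of the associated bundle with fiber the Stiefel manifold $V_{n-1}(\R^n)\!\cong\!\SO(n)$; since $n\!\ge\!3$, this fiber is connected with $\pi_1\!=\!\Z_2$, so the primary and — over the $2$-complex $M_G^{\si_G}$ — only obstruction to such a section is $w_2(W^{\ti\phi})\!\in\!H^2(M_G^{\si_G};\Z_2)$. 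As $W^{\ti\phi}$ is orientable ($w_1\!=\!0$) and $w_2(W^{\ti\phi})\!=\!0$ by hypothesis, the frame exists; complexifying it produces a real complex $(n\!-\!1)$-frame of $W$ over $M_G^{\si_G}$, which I would then extend invariantly over a $\si_G$-invariant tubular neighborhood.

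It remains to extend this frame, through real frames, over the rest of~$M_G$. The fibration $M_G\!\lra\!\bS^1$ with fiber $\Si_s\!=\!\Si$ shows $\dim M_G\!=\!3$, and $\si_G$ acts freely on $M_G\!-\!M_G^{\si_G}$. The hypothesis $c_1(W)|_{\Si_s}\!=\!0$ makes $W|_{\Si_s}$ trivial as a complex bundle, since complex vector bundles over a closed surface are classified by~$c_1$; this trivializes the extension problem fibrewise, the reality condition coupling the fibres only through the monodromy of $G$ and through the fixed locus. Passing to the free quotient $\big(M_G\!-\!M_G^{\si_G}\big)/\si_G$, extending the frame becomes an ordinary section-extension problem for a bundle with fiber the complex Stiefel manifold $V_{n-1}(\C^n)$, relative to the boundary values already fixed near $M_G^{\si_G}$. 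Since $V_{n-1}(\C^n)$ is $2$-connected and the quotient is at most $3$-dimensional, all relative obstructions, lying in $H^{k+1}(\,\cdot\,;\pi_k(V_{n-1}(\C^n)))$ with $\pi_k\!=\!0$ for $k\!\le\!2$, vanish. The explicit real gauge transformations of Lemma~\ref{lmm_repar}, supported near the circles $\Si^\si_i$ and realizing prescribed classes in $\pi_1(\tO(n))$, supply the concrete comparisons of frames needed to carry out this matching along the fixed locus.

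The main obstacle is the interplay of the two structures along $M_G^{\si_G}$: off the fixed locus the relevant frame lives in the highly connected complex Stiefel manifold $V_{n-1}(\C^n)$, where nothing obstructs the extension, whereas on $M_G^{\si_G}$ reality forces the frame into $\SO(n)$, whose fundamental group $\Z_2$ is precisely what produces the $w_2(W^{\ti\phi})$ obstruction. Verifying that the \emph{entire} obstruction is concentrated on the fixed locus and equals $w_2(W^{\ti\phi})$ — so that $w_1(W^{\ti\phi})\!=\!w_2(W^{\ti\phi})\!=\!0$ together with $c_1(W)|_{\Si_s}\!=\!0$ suffice — is the crux, and is where the bookkeeping of equivariant obstruction theory, made concrete by the gauge transformations of Lemma~\ref{lmm_repar}, must be handled with care.
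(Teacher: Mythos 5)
Your proposal is correct in substance, but it takes a genuinely different route from the paper's. The paper first invokes \cite[Propositions~4.1,~4.2]{BHH} to present $(W,\ti\phi)$ as $(\bI\!\times\!\Si\!\times\!\C^n,\id_{\bI}\!\times\!\si\!\times\!\fc_{\C^n})/\!\!\sim_g$ for a clutching map $g\!:\Si\!\lra\!\U(n)$ with $g(\si(z))\!=\!\ov{g(z)}$ --- this presentation is precisely where the hypothesis $c_1(W)|_{\Si_s}\!=\!0$ is used --- then exploits the triviality of $W^{\ti\phi}$ over each component of $M_G^{\si_G}$ (orientable, spin, rank $\ge 3$ over a $2$-complex) together with the gauge transformations of Lemma~\ref{lmm_repar} to normalize the classes $[g|_{\Si^\si_i}]\!\in\!\pi_1(\tO(n))$, and finally compares $g$ with $\det g\oplus I_{n-1}$ through an $\SU(n)$-valued map $f$ satisfying $f(\si(z))\!=\!\ov{f(z)}$, which it null-homotopes equivariantly by cutting a half-surface into a disk and using $\pi_1(\SU(n))\!=\!\pi_2(\SU(n))\!=\!0$. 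You instead run obstruction theory directly on the $3$-manifold $M_G$ for a real $(n\!-\!1)$-frame: over the fixed surfaces the obstructions are $w_1(W^{\ti\phi})$ and $w_2(W^{\ti\phi})$ (the same topological input as the paper's triviality of $W^{\ti\phi}|_C$), while over the free part the quotient of the complex Stiefel bundle is a fibration with $2$-connected fiber $V_{n-1}(\C^n)$, so sections extend rel boundary for dimension reasons. In fact, your closing worry --- that the obstruction must be shown to be ``concentrated on the fixed locus'' --- is already resolved by your own setup: after framing $W^{\ti\phi}$ over $M_G^{\si_G}$ and extending $\si_G$-invariantly to a tubular neighborhood $N$ (extend arbitrarily and average, $\xi\mapsto\frac12(\xi+\ti\phi\circ\xi\circ\si_G)$, shrinking $N$ to preserve independence), the remaining relative extension problem over the compact $3$-manifold $(M_G\!-\!N)/\si_G$ has obstructions in $H^{k+1}\big(\cdot,\prt;\pi_k(V_{n-1}(\C^n))\big)$ with $k\le2$, all of which vanish; no delicate equivariant bookkeeping, and no use of Lemma~\ref{lmm_repar}, is actually required. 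A notable byproduct: your argument never uses $c_1(W)|_{\Si_s}\!=\!0$ (your ``fibrewise trivialization'' remark is superfluous, as the complex Stiefel fiber is $2$-connected regardless), so it proves the proposition \emph{without} that hypothesis, which in the paper is needed only to apply \cite{BHH}; what the paper's clutching-function method buys in exchange is the explicit normal form for the transition map, which is the formalism reused in the sign comparisons of the appendix (Lemma~\ref{ext_lmm}).
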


\begin{proof}
\noindent
By \cite[Propositions~4.1,~4.2]{BHH}, there is an isomorphism of real bundle pairs
$$(W,\ti\phi)\approx \big(\bI\!\times\! \Si\!\times\!\C^n,
\id_{\bI}\!\times\!\si\!\times\!\fc_{\C^n}\big)/\!\!\sim_g,
\quad\hbox{where}~~~
(1,z,v)\sim_g\big(0,G(z),g(z)v\big)\quad\forall\,(z,v)\!\in\! \Si\!\times\!\C^n,$$
for some $g\!: \Si\!\lra\!\tU(n)$ such that $g(\si(z))\!=\!\ov{g(z)}$ 
for all $z\!\in\!\Si$.\\

\noindent
We first show that the above isomorphism can be chosen so that 
$$g_{|\Si^\si_i}:\Si^\si_i\lra O(n)$$ 
is homotopic to the identity-valued constant map~$\Id$ on each fixed component~$\Si^{\si}_i$
for $i=1,\ldots,m$.  
The map~$G$ defines a permutation on $\{\Si^\si_i\}$. 
Every cycle $(i_1,\dots,i_k)$  in this permutation  defines a connected component $C$ 
of $M_G^{\si_G}$.  Since $W^{\ti\phi}$ is spin and $\rk(W^{\ti\phi})\geq 3$, the bundle $W^{\ti\phi}|_C$ is trivial. 
Thus, 
\BE{spincond_e}\sum_{l=1}^k \big[g_{|\Si^\si_{i_l}}\big] =1 \in \pi_1(\tO(n)).\EE
For $j=2,\dots, k$, let 
\BE{repar_e}
\big[a_{i_j}\big]=\sum_{l=1}^{j-1} \big[g_{|\Si^\si_{i_l}}\big]  \in \pi_1(\tO(n))\EE
and $\psi_{i_j}:\Si\!\lra\!\U(n)$ be the map constructed in Lemma~\ref{lmm_repar} 
corresponding to $(i,[a])\!=\!(i_j,[a_{i_j}])$. Let
$$\Psi=\psi_{i_k}\cdot\ldots\cdot\psi_{i_2}\!:\Si\lra\U(n).$$
There is a real bundle pair isomorphism over $(M_G,\si_G)$
\begin{gather*} 
\big(\bI\!\times\! \Si\!\times\!\C^n,\id_{\bI}\!\times\!\si\!\times\!\fc_{\C^n}\big)/\!\!\sim_g
\lra \big(\bI\!\times\! \Si\!\times\!\C^n,
\id_{\bI}\!\times\!\si\!\times\!\fc_{\C^n}\big)/\!\!\sim_{\ti g},\qquad
(s,z,v)\lra (s,z,\Psi(z)v),\\
\hbox{where}\qquad  \ti g(z)=\Psi(G(z))g(z)\Psi^{-1}(z). 
\end{gather*}
Since $\Psi_{|\Si^{\si}_{i_j}}=\psi_{i_j|\Si^{\si}_{i_j}}$ for $j=2,\ldots,k$
and $G(\Si^{\si}_{i_j})=\Si^{\si}_{i_{j+1}}$ for $j=1,\dots,k\!-\!1$,
$$\big[\ti g_{|\Si^{\si}_{i_j}}\big] = \big[a_{i_{j+1}}\big]
+ \big[g_{|\Si^{\si}_{i_j}}\big] 
+ \big[a_{i_j}\big]=1\in\pi_1(O(n))\qquad \forall~j=2,\ldots,k\!-\!1;$$
the second equality follows from~\eref{repar_e}. 
Furthermore, since $\Psi_{|\Si^{\si}_{i_1}}=\Id$ and $G(\Si^{\si}_{i_k})=\Si^{\si}_{i_1}$, 
$$\big[\ti g_{|\Si^{\si}_{i_1}} \big]= \big[a_{i_{2}}\big]+\big[g_{|\Si^{\si}_{i_1}}\big]=1\in\pi_1(O(n))
\quad\hbox{and}\quad
\big[\ti g_{|\Si^{\si}_{i_k}}\big] = \big[ g_{|\Si^{\si}_{i_k}}\big]+\big[a_{i_k}\big]=1\in\pi_1(O(n))$$
by~\eref{repar_e}  and by~\eref{spincond_e}, respectively.\\


\noindent
Let $e_1,\ldots,e_n$ be the standard coordinate basis for~$\C^n$.
We define a vector space isomorphism
\begin{gather*}
\Xi\!:\C^n\lra \La^{\top}_{\C}\C^n \oplus \C^{n-1} \qquad\hbox{by}\\
e_1\lra (e_1\!\w\!\ldots\!\w\!e_n,0), \qquad
e_i\lra(0,e_{i-1})~~~\forall\,i\!=\!2,\ldots,n.
\end{gather*}
In particular, the composition
$$\La_{\C}^{\top}\C^n\stackrel{\La_{\C}^{\top}\Xi}{\lra}
\La_{\C}^{\top}\big(\La^{\top}_{\C}\C^n\!\oplus\!\C^{n-1}\big)
=\La^{\top}_{\C}\C^n\otimes \La^{\top}_{\C}\C^{n-1}
\lra \La^{\top}_{\C}\C^n,$$
where the last map sends $w\!\otimes\!e_1\!\w\!\ldots\!\w\!e_{n-1}$
to~$w$, is the identity.
Define
$$f\!: \Si\lra\tU(n) \qquad\hbox{by}\qquad
f(z)\Xi\big(g(z)v\big)=
\left(\begin{array}{cc}\det g(z)& 0\\ 0&\bI_{n-1}\end{array}\right)\Xi(v)
\quad\forall\,(z,v)\!\in\! \Si\!\times\!\C^n.$$
In particular, $f(z)\!\in\!\SU(n)$ and $f(\si(z))\!=\!\ov{f(z)}$ 
for all $z\!\in\! \Si$.
It is sufficient to show that $f$ is homotopic to the constant map~$\Id$
subject to these conditions.\\

\noindent
Let $(\Si^b,c)$ be an oriented sh-surface that doubles to~$(\Si,\si)$. 
Since the map $g$ can be chosen to be homotopic to~$\Id$ on each fixed 
component~$\Si_i^{\si}$, 
the map $f\!:(\prt\Si^b)_i\!\lra\!\tO(n)$ is homotopic to~$\Id|_{(\prt\Si^b)_i}$ 
on each boundary component $(\prt\Si^b)_i$ with $|c_i|\!=\!0$.
For each boundary component $(\prt\Si^b)_i$ with $|c_i|\!=\!1$,
the map $f_{|(\prt\Si^b)_i}\!:(\prt\Si^b)_i\!\lra\!\SU(n)$ is homotopic to~$\Id$;
see \cite[Lemma~2.2]{Teh}.
In both cases, the homotopies are through maps
$f_t\!:(\prt\Si^b)_i\!\lra\!\SU(n)$ such that  $f_t(\si(z))\!=\!\ov{f_t(z)}$ 
for all $z\!\in\!(\prt\Si^b)_i$.
They extend over $\Si^b$ as follows. 
Let $S^1\!\times\!\bI\!\lra\!U$ be a parametrization of a (closed) neighborhood~$U$ 
of  $(\prt\Si^b)_i\!\subset\!\Si^b$ with coordinates~$(\th,s)$ and~define
$$G_t:\Si^b\lra\SU(n) \qquad\hbox{by}\quad
G_t(z)=\begin{cases}
f_{(1-s)t}(\th)\cdot f^{-1}(\th),&\text{if}~ z=(\th,s)\in U\approx S^1\!\times\!\bI;\\
I_n,& \text{if}~ z\in \Si^b\!-\!U.
\end{cases}$$
Since $G_t((\th,1))=I_n$ for all~$t$, this map is continuous. 
Moreover, $G_0(z)=I_n$ for all $z\in \Si^b$ and 
$$G_t((\th, 0))=f_{t}(\th)\cdot f^{-1}(\th)$$ 
is a homotopy between $\Id$ and $f^{-1}$. Thus, $H_t=G_t\cdot f$ is a homotopy 
over $\Si^b$ extending~$f_t$.\\

\noindent
By the above, we may assume that $f$ is the constant map~$\Id$ on the boundary of~$\Si^b$. 
Choose arcs in $\Si^b$ with endpoints on $\prt\Si^b$ which cut $\Si^b$ into a disk. 
\begin{figure*}[htb]
\begin{center}
\advance\topskip-10cm
\includegraphics[width=0.3\textwidth,height=150px]{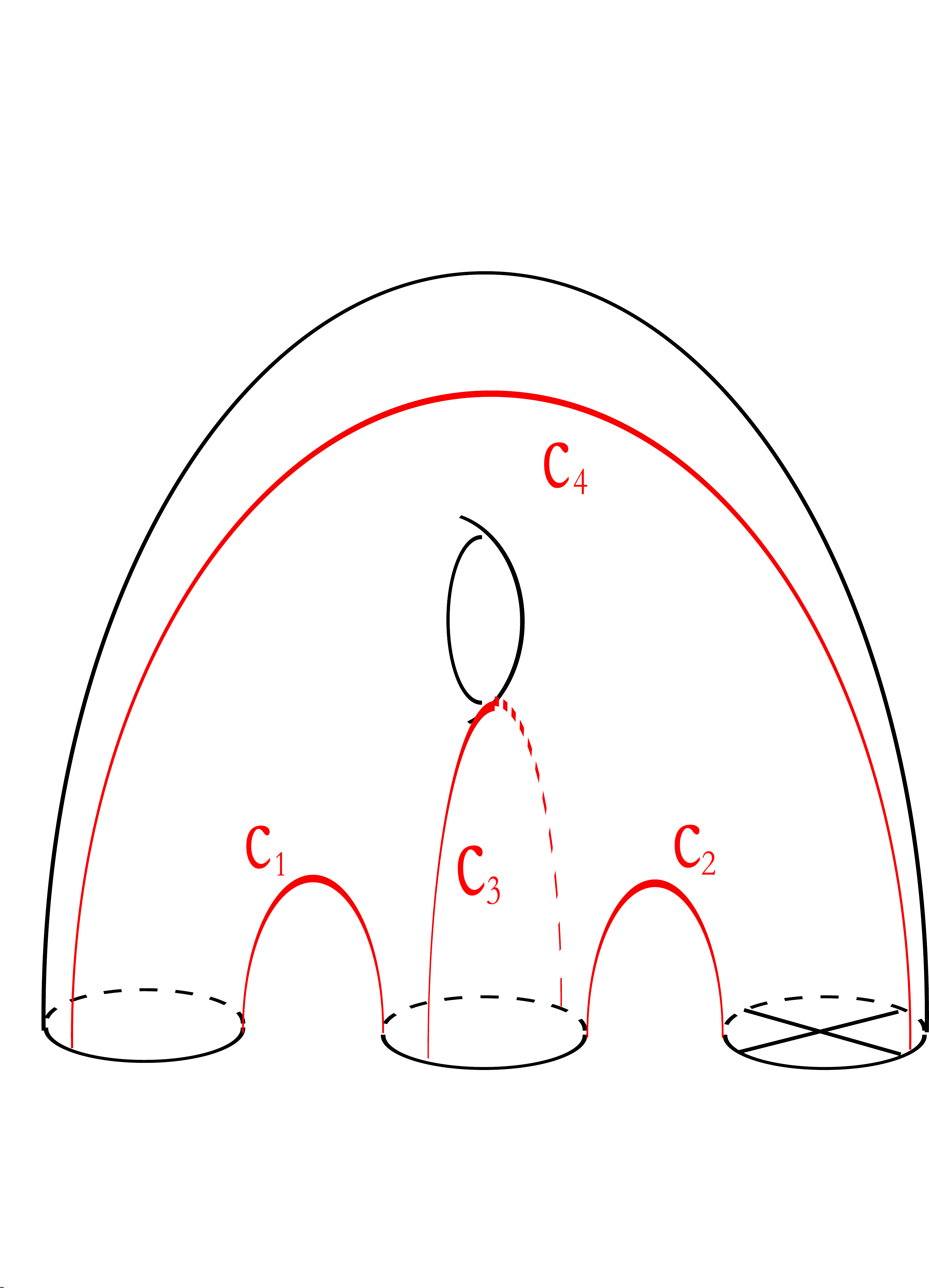}
\end{center}
\caption*{The arcs $c_1,\dots, c_4$ cut $\Si^b$ to a disk.}
\end{figure*}
Each such arc defines an element of $\pi_1(\SU(n),I_n)=0$.
Thus, we can homotope~$f$ to~$\Id$ over the arcs while keeping it fixed at the endpoints. 
Similarly to the above, this homotopy extends over~$\Si^b$. 
Thus, we may assume that $f$ is the constant map~$\Id$ over the boundary of 
the disk obtained from cutting~$\Si^b$ along the~arcs. 
Since $\pi_2(\SU(n),I_n)=0$,
$$f\!:(D^2,S^1)\lra \big(SU(n),I_n\big)$$
can be homotoped to~$\Id$ as a relative map.
Doubling such a homotopy~$f_t$  by the requirement that $f_t(\si(z))\!=\!\ov{f_t(z)}$ 
for all $z\!\in\!\Si$, we obtain the desired homotopy from $f$ to~$\Id$ over all of~$\Si$.
\end{proof}

\begin{prp}\label{bndlsplit_prp}
Let $( \Si,\si)$ be a symmetric surface,
$G\!:( \Si,\si)\!\lra\!(\Si,\si)$ be a real orientation-preserving diffeomorphism,
and $(W,\ti\phi)$ be a rank~$n$ real bundle pair over~$(M_G,\si_G)$.
If $n\!\ge\!2$, $W^{\ti\phi}\!\lra\!M_G^{\si_G}$ is orientable,
and $w_2(W^{\ti\phi})\!=\!0$, there is an isomorphism
$$(W,\ti\phi)\oplus \big(\La_{\C}^{\top}(W,\ti\phi)\big)^*
\approx (n\!+\!1)(M_G\!\times\!\C,\phi_G\!\times\!\fc_{\C})$$
of real bundle pairs.
\end{prp}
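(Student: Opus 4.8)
The plan is to deduce this from Proposition~\ref{bndlsplit_prp0} applied not to $(W,\ti\phi)$ itself but to the auxiliary rank $n\!+\!1$ real bundle pair
$$(W',\ti\phi')\equiv(W,\ti\phi)\oplus\big(\La_{\C}^{\top}(W,\ti\phi)\big)^*$$
over~$(M_G,\si_G)$. Since $n\!\ge\!2$, the rank of~$W'$ is $n\!+\!1\!\ge\!3$, which is the range in which Proposition~\ref{bndlsplit_prp0} operates. The point of passing to~$W'$ is that its complex top exterior power is canonically trivial: writing $(L,\ti\phi_L)\!=\!\La_{\C}^{\top}(W,\ti\phi)$ and using that the top exterior power of a direct sum is the tensor product of the top exterior powers while the top exterior power of a line bundle is itself,
$$\La_{\C}^{\top}(W',\ti\phi')=(L,\ti\phi_L)\otimes(L,\ti\phi_L)^*\cong(M_G\!\times\!\C,\si_G\!\times\!\fc_{\C}),$$
the isomorphism being the evaluation pairing, which intertwines the induced conjugation with~$\fc_{\C}$. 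In particular $c_1(W')|_{\Si_s}\!=\!0$ for every $s\!\in\!\bI$, so the one hypothesis of Proposition~\ref{bndlsplit_prp0} that is absent from the present statement holds automatically for~$W'$.

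Next I would verify the remaining two hypotheses for $(W',\ti\phi')$. Over the fixed locus~$M_G^{\si_G}$ one has
$$(W')^{\ti\phi'}=W^{\ti\phi}\oplus\big(L^{\ti\phi_L}\big)^*,$$
and the real part $L^{\ti\phi_L}$ of the complex determinant is canonically the real determinant $\La_{\R}^{\top}(W^{\ti\phi})$ of the real part, so that $w_1(L^{\ti\phi_L})\!=\!w_1(W^{\ti\phi})\!=\!0$ because $W^{\ti\phi}$ is orientable. Hence the real line bundle $(L^{\ti\phi_L})^*$ is orientable, so $(W')^{\ti\phi'}$ is orientable; and by the Whitney sum formula, since a real line bundle contributes only its first Stiefel-Whitney class,
$$w\big((W')^{\ti\phi'}\big)=w\big(W^{\ti\phi}\big)\cup w\big((L^{\ti\phi_L})^*\big)=w\big(W^{\ti\phi}\big),$$
so that $w_2((W')^{\ti\phi'})\!=\!w_2(W^{\ti\phi})\!=\!0$.

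With all three hypotheses verified, Proposition~\ref{bndlsplit_prp0} applied to the rank $n\!+\!1$ pair $(W',\ti\phi')$ yields
$$(W',\ti\phi')\approx\La_{\C}^{\top}(W',\ti\phi')\oplus n\,(M_G\!\times\!\C,\si_G\!\times\!\fc_{\C}).$$
Substituting the triviality of $\La_{\C}^{\top}(W',\ti\phi')$ established in the first step and unwinding the definition of~$W'$ gives exactly
$$(W,\ti\phi)\oplus\big(\La_{\C}^{\top}(W,\ti\phi)\big)^*\approx(n\!+\!1)(M_G\!\times\!\C,\si_G\!\times\!\fc_{\C}),$$
as claimed. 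The step requiring the most care is making sure that the two reductions above are genuine statements about real bundle pairs and their real loci: that the canonical complex trivialization of the determinant of~$W'$ respects the conjugations, and that the real part of $\La_{\C}^{\top}(W,\ti\phi)$ really is $\La_{\R}^{\top}(W^{\ti\phi})$ over the fixed locus. Granting these, the reduction to Proposition~\ref{bndlsplit_prp0} is immediate and, notably, requires no sign computation.
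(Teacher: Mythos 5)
Your proposal is correct and follows essentially the same route as the paper: the paper's proof likewise applies Proposition~\ref{bndlsplit_prp0} to the rank-$(n\!+\!1)$ pair $(W,\ti\phi)\oplus\big(\La_{\C}^{\top}(W,\ti\phi)\big)^*$, whose complex determinant $\La_{\C}^{\top}(W,\ti\phi)\otimes\big(\La_{\C}^{\top}(W,\ti\phi)\big)^*$ is canonically trivial. Your only addition is the explicit verification of the hypotheses of Proposition~\ref{bndlsplit_prp0} for this auxiliary pair (orientability and $w_2$-vanishing of its real part, and $c_1|_{\Si_s}\!=\!0$), which the paper leaves implicit.
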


\begin{proof}
Applying Proposition~\ref{bndlsplit_prp0}, we obtain
\begin{equation*}\begin{split}
(W,\ti\phi)\oplus \big(\La_{\C}^{\top}(W,\ti\phi)\big)^*
&\approx \La_{\C}^{\top}\big((W,\ti\phi)\!\oplus\!\big(\La_{\C}^{\top}(W,\ti\phi)\big)^*\big)
\oplus n(M_G\!\times\!\C,\phi_G\!\times\!\fc_{\C})\\
&= \La_{\C}^{\top}(W,\ti\phi)\otimes\big(\La_{\C}^{\top}(W,\ti\phi)\big)^*
\oplus n(M_G\!\times\!\C,\phi_G\!\times\!\fc_{\C})\\
&= (n\!+\!1)(M_G\!\times\!\C,\phi_G\!\times\!\fc_{\C})\,.
\end{split}\end{equation*}
This establishes the claim.
\end{proof}

\noindent
The importance of these propositions to the orientability problem lies in the implication that they give rise to isomorphisms of the determinant  bundles of $\dbar$-operators on the two sides, inducing equality on their first Stiefel-Whitney classes. This equality may still hold even without the splittings of the bundles provided by the above propositions; see the appendix.

\section{Proofs of main statements}
\label{mainpf_sec}
 
\noindent
In this section, we use Proposition~\ref{bndlsplit_prp} to show that the first Stiefel-Whitney classes of $\cM_{g,l}^\si$ and of the determinant line bundle for the trivial rank~1 real bundle pair over it are the same; see Proposition~\ref{prp_domains}. 
This result is also obtained in~\cite{Remi}; see Corollaire~1.2, Corollaire~1.1, Proposition~1.4,
Lemme~1.3, and Lemme~1.4 in~\cite{Remi}.
We then use Proposition~\ref{prp_domains} along with Proposition~\ref{bndlsplit_prp} to establish Theorem~\ref{thm_maps}. We conclude this section by deducing Corollary~\ref{CIorient_crl} from Theorem~\ref{thm_maps}.

\begin{prp}\label{prp_domains} 
Let $g,l\in \Z^{\ge0}$ be such that $g\!\ge\!2$ and $g+2l\ge4$. 
If $(\Si,\si)$ is a genus $g$ symmetric surface,
$$w_1(\cM_{g,l}^{\si}) = w_1(\det\dbar_\C).$$
\end{prp}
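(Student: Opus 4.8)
The plan is to verify the identity loop by loop. By universal coefficients $H^1(\cM_{g,l}^\si;\Z_2)\!\cong\!\Hom(H_1(\cM_{g,l}^\si;\Z),\Z_2)$, and under the assumption $g\!+\!2l\!\ge\!4$ together with Remark~\ref{mfld_rem} the space $\cM_{g,l}^\si$ is a manifold outside codimension~$2$ whose fundamental group is the mapping class group $\pi_0(\cD_\si)$ (the Teichm\"uller space of symmetric surfaces being contractible). Thus it suffices to show that $w_1(\cM_{g,l}^\si)$ and $w_1(\det\dbar_\C)$ pair equally with the class of every loop. Each loop is represented by a real orientation-preserving diffeomorphism $G\!:\!(\Si,\si)\!\lra\!(\Si,\si)$, and the associated family of symmetric surfaces over~$S^1$ is the mapping cylinder $(M_G,\si_G)$; the trivial pair and the relative (co)tangent pair assemble to real bundle pairs over $(M_G,\si_G)$, and the restrictions of the two determinant line bundles to the loop are the determinant lines of the corresponding families of real Cauchy--Riemann operators over~$S^1$. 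In this way the statement reduces to an equality of first Stiefel--Whitney classes of line bundles over~$S^1$, where $w_1$ is the only invariant and tensor products add $w_1$ in~$\Z_2$.

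First I would identify the orientation bundle of the moduli space with a determinant line. Let $(T\Si,\tnd\si)\!\lra\!(M_G,\si_G)$ be the vertical tangent real bundle pair of $M_G\!\to\!S^1$ and $K\!=\!(T^*\Si,(\tnd\si)^*)$ its dual. Serre duality and the Kodaira--Spencer isomorphism identify the real cotangent space of $\cM_{g}^\si$ with the space $H^0(\Si,K^{\otimes2})^{\tn{real}}$ of real quadratic differentials, and $g\!\ge\!2$ forces $H^1(\Si,K^{\otimes2})\!=\!H^0(\Si,T\Si)\!=\!0$. Hence $\det\dbar_{K^{\otimes2}}$ is $\La_{\R}^{\top}$ of this space, so $w_1(\det\dbar_{K^{\otimes2}})\!=\!w_1(\cM_g^\si)$. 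The marked points contribute only complex, hence orientable, factors (they pull back $T^*\cM_g^\si$ along the forgetful map, whose fibers are complex), so over the loop $w_1(\det\dbar_{K^{\otimes2}})\!=\!w_1(\cM_{g,l}^\si)$.

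Next I would use Proposition~\ref{bndlsplit_prp} to compare $\det\dbar_{K^{\otimes2}}$ with $\det\dbar_\C$, exploiting that $K^{\otimes2}$ is a genuine square. Apply the proposition to the rank~$2$ real bundle pair $W\!=\!(T\Si,\tnd\si)^{\oplus2}$ over $(M_G,\si_G)$, whose top exterior power is $(T\Si)^{\otimes2}$, so that $(\La_{\C}^{\top}W)^*\!=\!K^{\otimes2}$. The proposition gives an isomorphism $W\oplus(\La_{\C}^{\top}W)^*\!\approx\!3(M_G\!\times\!\C,\si_G\!\times\!\fc_\C)$ of real bundle pairs, hence by the additivity~\eref{sum} an isomorphism $\det\dbar_{(T\Si,\tnd\si)}^{\otimes2}\otimes\det\dbar_{K^{\otimes2}}\!\cong\!\det\dbar_\C^{\otimes3}$ of determinant line bundles over the loop. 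Taking $w_1$ and using $2x\!=\!0$ and $3x\!=\!x$ in~$\Z_2$ yields $w_1(\det\dbar_{K^{\otimes2}})\!=\!w_1(\det\dbar_\C)$. Combined with the previous paragraph, this is exactly the required pairing equality on each loop, and the proposition follows.

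The crux is checking the hypotheses of Proposition~\ref{bndlsplit_prp} for $W\!=\!(T\Si,\tnd\si)^{\oplus2}$, namely that $W^{\ti\phi}$ is orientable with $w_2(W^{\ti\phi})\!=\!0$. Orientability is automatic, since $W^{\ti\phi}\!=\!(T\Si)^{\tnd\si}\!\oplus\!(T\Si)^{\tnd\si}$ has $w_1\!=\!2w_1((T\Si)^{\tnd\si})\!=\!0$, and $w_2(W^{\ti\phi})\!=\!w_1((T\Si)^{\tnd\si})^2$. Here $M_G^{\si_G}$ is the mapping torus of the circle diffeomorphisms induced by $G$ on the components of $\Si^\si$, hence a disjoint union of tori and Klein bottles, each of Euler characteristic~$0$; moreover $(T\Si)^{\tnd\si}$ restricts on each component to the tangent line of the fiber circles, so $w_1((T\Si)^{\tnd\si})\!=\!w_1(TM_G^{\si_G})$. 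By the Wu formula on a closed surface, $x^2\!=\!x\!\cup\!w_1(TM_G^{\si_G})$ for $x\!\in\!H^1$, whence $w_1((T\Si)^{\tnd\si})^2\!=\!w_1(TM_G^{\si_G})^2\!=\!w_2(TM_G^{\si_G})$, which is the mod~$2$ Euler characteristic and therefore vanishes. This spin verification is the step I expect to require the most care; the remaining ingredients are standard index-theoretic and Serre-duality manipulations.
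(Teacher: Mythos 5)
Your proof is correct and follows essentially the same route as the paper's: the Kodaira--Spencer/Serre duality identification of $w_1(\cM_{g,l}^{\si})$ with $w_1(\det\dbar_{(T^*\Si,\tnd\si^*)^{\otimes2}})$ (with the orientable forgetful fibers handling the marked points), reduction to loops realized as mapping tori $(M_G,\si_G)$, and the application of Proposition~\ref{bndlsplit_prp} to $(T\Si,\tnd\si)^{\oplus2}$, whose dual top exterior power is the square of the cotangent pair, followed by the same $\Z_2$ bookkeeping. Your Wu-formula verification that $w_1\big((T\Si)^{\tnd\si}\big)^2=0$ on the tori and Klein bottles making up $M_G^{\si_G}$ is a welcome detail: the paper asserts the vanishing of $w_1$ and $w_2$ of $(W,\ti\phi)\oplus(W,\ti\phi)\oplus(W,\ti\phi)^{*\otimes2}$ without spelling out this computation.
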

 
\begin{proof}
Via the Kodaira-Spencer map, $T_{[\fJ]}\cM^{\si}_{g}$ is canonically isomorphic 
to $H_{\fJ}^1(\Si;T\Si)^{\sigma}$; see \cite[Section~3.1.2]{Melissa}.
By Serre duality \cite[p153]{GH}, there is a canonical isomorphism
$$H_{\fJ}^1(\Si;T \Si)^{\sigma}\approx 
\big(H_{\fJ}^0( \Si;T^*\Si^{\otimes2})^{\sigma}\big)^*.\footnotemark$$
Since the genus of $\Si$ is at least~2 under our assumptions, 
 $$H_{ \fJ}^0( \Si;T^* \Si^{\otimes2})^{\sigma}\cong \ker\dbar_{(T^* \Si,\text{d}\sigma^*)^{\otimes2}}.$$
The forgetful map 
$$f:\cM_{g,l}^\si\lra \cM_g^\si$$
with fiber isomorphic to an open subset of $\Si^l$, 
determined by the positions of the first elements in the $l$~pairs of conjugate points,  
induces an isomorphism
$$\La^\top_\R (T\cM_{g,l}^\si)\approx 
\La^\top_\R (f^{\text{Vert}})\otimes\La^\top_\R (f^*T\cM_g^\si).$$
Since the elements of  $\cD_\si$ preserve the orientation of $\Si$, the bundle  $\La^\top_\R( f^{\text{Vert}})$ is orientable. Thus,
$$w_1(\cM_{g,l}^\si)=w_1(f^*\ker\dbar_{(T^* \Si,\tnd\si^*)^{\otimes2}}).$$
\footnotetext{The real part of the Serre duality identifies the spaces of invariant sections
on one side with the space of anti-invariant sections on the other; the latter is isomorphic
to the space of invariant sections by multiplication by~$\fI$.}
\noindent
Let $\gm$ be a loop in $\cM_{g,l}^{\si}$. Under our assumptions, $\gm$ can be taken in the smooth locus  and thus lifts to a mapping torus $(M_G,\si_G)$ for some real diffeomorphism 
$G\!: (\Si,\si)\!\lra\!(\Si,\si)$. 
Let
$$(W,\ti\phi)=\bI\!\times\!T\Si\big/\!\!\sim\lra (M_G,\si_G),\qquad 
(1,v)\sim(0,\tnd G(v))~\forall\,v\!\in\!T\Si,$$
with the complex structure in the fiber of $W$ over $s\!\times\!\Si$ being~$\fJ_s$.
Since 
$$w_1((W,\ti\phi)\oplus (W,\ti\phi)\oplus (W,\ti\phi)^{*\otimes2})=0=w_2((W,\ti\phi)\oplus (W,\ti\phi)\oplus (W,\ti\phi)^{*\otimes2}),$$
 by Proposition~\ref{bndlsplit_prp}
$$(W,\ti\phi)\oplus (W,\ti\phi)\oplus (W,\ti\phi)^{*\otimes2}
\approx 3(M_G\!\times\!\C,\si_G\!\times\!\fc_{\C}).$$
Since   the indices of    $\dbar$-operators
on $2(W,\wt\phi)$ and $2( M_G\!\times\!\C,\si_G\!\times\!\fc_\C)$ are canonically oriented,
$$
w_1(\det\dbar_{(T^* \Si, \sigma^*)^{\otimes2}})=w_1(\det\dbar_\C),
$$ 
proving the claim.
\end{proof}

\begin{proof}[{\bf \emph{Proof of Theorem \ref{thm_maps}}}] 
First assume that $l$ is sufficiently large so that $\cM_{g,l}^{\si}$ is a manifold;
see Remark~\ref{mfld_rem}.
If the moduli space $\fM_{g,l}(X,B)^{\phi,\si}$ is cut transversely, the forgetful map 
$$\mathfrak{f}: \fM_{g,l}(X,B)^{\phi,\si}\lra\cM_{g,l}^{\si}$$ 
induces the equality of   first Stiefel-Whitney classes
$$w_1(\fM_{g,l}(X,B)^{\phi,\si})=w_1(\det D_{(TX,\tnd\phi)})+
\mathfrak{f}^*w_1(\cM_{g,l}^{\si}).$$
Thus, it suffices to show that
$$w_1(\det D_{(TX,\tnd\phi)})=n\,\mathfrak{f}^*w_1(\cM_{g,l}^{\si})$$ 
over $\cH_{g,l}(X,B)^{\phi,\si}$.\\

\noindent
By \eref{sum}, there is a canonical isomorphism
$$\det D_{(TX\oplus 2L, \tnd\phi\oplus 2\ti\phi)}
\approx\det D_{(TX, \text{d}\phi)}\otimes (\det D_{(L,\ti\phi)})^{\otimes2}$$ 
and thus
$$w_1(\det D_{(TX\oplus 2L,\tnd\phi\oplus 2\ti\phi)})
=w_1(\det D_{(TX,\tnd\phi)}).$$
Let $\gm$ be a loop in $\fM_{g,l}(X,B)^{\phi,\si}$.
Under the assumption of Remark~\ref{mfld_rem}, the projection 
$$\fB_{g,l}(X,B)^{\phi,\si}\times \cJ_\si\lra \cH_{g,l}(X,B)^{\phi,\si}$$ 
admits local slices. Thus, there exists a path $\wt\gm_t=(u_t,j_t)$ in
$\fB_g(X,B)^{\phi,\si}\!\times\!\cJ_\si$ lifting $\gm$ and 
a real diffeomorphism  $G\!\in\!\cD_\si$  such that $\wt\gm_1=G\cdot\wt\gm_0$.
Let $(M_G,\si_G)$ be the corresponding mapping torus. 
By Proposition~\ref{bndlsplit_prp}, 
\BE{eq_iso}
\ev^*(TX\oplus 2L,\tnd\phi\oplus 2\ti\phi)\oplus 
 \ev^*(\La_\C^{\top}(TX\oplus 2L,\tnd\phi\oplus 2\ti\phi))^*\approx 
 (n\!+\!3)\,(M_G\!\times\!\C,\si_G\!\times\!\fc_\C),\EE
where $\ev\!:M_G\!\lra\!X$ is the natural evaluation map determined by~$\wt\gm_t$.
Note that 
$$\La_\C^{\top}(TX\oplus 2L,\tnd\phi\oplus 2\ti\phi)
\approx\big((L,\ti\phi)^{\otimes 2}\big)^{\otimes 2}$$
as real bundle pairs over $(X,\phi)$.
By Proposition~\ref{bndlsplit_prp}, 
$$2\,\ev^*(L,\ti\phi)^{\otimes 2} \oplus 
\big( \La_\C^{\top}\big(2\,\ev^*(L,\ti\phi)^{\otimes 2}\big)\big)^*
\approx  3(M_G\!\times\!\C, \si_G\!\times\!\fc_\C).$$
Since the determinant line bundles of $\dbar$-operators
on $2\,\ev^*(L,\ti\phi)^{\otimes 2}$ and $2(M_G\!\times\!\C,\si_G\!\times\!\fc_\C)$ 
are canonically oriented,
$$w_1\big(\det\dbar_{(\La_\C^{\top}(TX\oplus 2L,\tnd\phi\oplus 2\ti\phi))^*}\big)
=w_1\big(\det\dbar_{((L,\ti\phi)^{\otimes 4})^*}\big)
=w_1(\det\dbar_\C).$$ 
By (\ref{eq_iso}),
$$w_1\big(\det\dbar_{(TX\oplus 2L,\tnd\phi\oplus 2\ti\phi)}\big)
+w_1(\det\dbar_{\La_\C^{\top}(TX\oplus 2L,\tnd\phi\oplus 2\ti\phi)}) 
= (n\!+\!3)\, w_1(\det\dbar_\C),$$
and thus
$$w_1(\det\dbar_{(TX\oplus 2L,\tnd\phi\oplus 2\ti\phi)})
=n\,(w_1(\det\dbar_\C))=n\,\mathfrak{f}^*w_1(\cM_{g,l}^\si),$$
where the last equality holds by Proposition~\ref{prp_domains}.\\

\noindent
As in the proof of Propositions~\ref{prp_domains}, the vertical bundle of the map
$$f\!: \fM_{g,l}(X,B)^{\phi,\si} \lra \fM_{g,l'}(X,B)^{\phi,\si}$$
forgetting the last $l\!-\!l'$ marked points is orientable.
Thus,  $\fM_{g,l'}(X,B)^{\phi,\si}$ is as orientable as $\fM_{g,l}(X,B)^{\phi,\si}$.
The condition $g\!+\!2l\!\ge\!4$ in~(2) of Theorem~\ref{thm_maps} ensures that 
$w_1(\cM_{g,l}^{\si})$ is defined.  
\end{proof}

\begin{proof}[{\bf\emph{Proof of Corollary~\ref{CIorient_crl}}}]
The involutions $\tau_n$ on~$\P^{n-1}$ and $\eta_{2n}$ on~$\P^{2n-1}$
naturally lift to involutions~$\ti\tau_n$ on $\cO_{\P^{n-1}}(1)$
and~$\ti\eta_{2n}$ on $2\cO_{\P^{2n-1}}(1)$ 
so that the usual Euler sequences for~$\P^{n-1}$ and $\P^{2n-1}$
become short exact sequences of real bundle pairs: 
\begin{gather*}
0\lra (\P^{n-1}\!\times\!\C,\tau_n\!\times\!\id_{\C})\lra
n\big(\cO_{\P^{n-1}}(1),\ti\tau_n\big)\lra (T\P^{n-1},\tnd\tau_n)\lra0,\\
0\lra (\P^{2n-1}\!\times\!\C,\eta_{2n}\!\times\!\id_{\C})\lra
n\big(2\cO_{\P^{2n-1}}(1),\ti\eta_{2n}\big)\lra (T\P^{2n-1},\tnd\eta_{2n})
\lra0\,.
\end{gather*}
If $X_{n;\a}\!\subset\!\P^{n-1}$ and $X_{2n;\a}\!\subset\!\P^{2n-1}$ 
are complete intersections preserved by the involutions~$\tau_n$ and~$\eta_{2n}$,
respectively, the sequences
\begin{gather*}
0\lra (TX_{n;\a},\tnd\tau_{n;\a})\lra  
(T\P^{n-1},\tnd\tau_{n})\big|_{X_{n;\a}}\lra  
\bigoplus_{i=1}^k\big(\cO_{\P^{n-1}}(1),\ti\tau_n\big)^{\otimes a_i}\big|_{X_{n;\a}}
\lra 0,\\
\begin{split}
0&\lra (TX_{2n;\a},\tnd\eta_{2n;\a})\lra  
(T\P^{2n-1},\tnd\eta_{2n})\big|_{X_{2n;\a}}\\
&\lra  \bigoplus_{a_i\in2\Z}
\big(\La_{\C}^{\top}(\cO_{\P^{2n-1}}(2),\ti\eta_{2n})\big)^{\otimes (a_i/2)}\big|_{X_{2n;\a}}
\oplus \bigoplus_{a_i'\not\in2\Z} 
(2\cO_{\P^{2n-1}}(a_i'),\ti\eta_{2n})\big)\big|_{X_{2n;\a}}
\lra 0
\end{split}
\end{gather*}
are also  short exact sequences of real bundle pairs.\footnote{In the second case,
the odd degrees~$a_i$ come in pairs; 
the second sum is taken over one $a_i'\!=\!a_i$ for each such pair.}
Thus, under the assumptions of Corollary~\ref{CIorient_crl},
\begin{equation*}\begin{split}
\La_{\C}^{\top}\big(TX_{n;\a},\tnd\tau_{n;\a}) &\approx 
\big(\big(\cO_{\P^{n-1}}(1),\ti\tau_n\big)^{\otimes ((n-|\a|)/2)}\big|_{X_{n;\a}}
\big)^{\otimes2}, \\
\La_{\C}^{\top}\big(TX_{2n;\a},\tnd\eta_{2n;\a}) &\approx 
\big(\big(\La_{\C}^{\top}(\cO_{\P^{2n-1}}(2),\ti\eta_{2n})\big)^{\otimes ((2n-|\a|)/4)}
\big|_{X_{2n;\a}}\big)^{\otimes2}\,,
\end{split}\end{equation*}
where $|\a|\!=\!a_1\!+\!\ldots\!+\!a_k$.
We denote the rank~1 real bundle pairs being squared above (before the square is taken)
by~$(L_{\tau},\ti\phi_{\tau})$ and $(L_{\eta},\ti\phi_{\eta})$.
Since $w_1(X_{n;\a}^{\tau_n})\!=\!0$ under the assumptions of 
Corollary~\ref{CIorient_crl}(1) and $X_{2n;\a}^{\eta_{2n}}\!=\!\eset$,
$$w_2(X_{n;\a}^{\tau_n})=\bigg(\binom{n}{2}-\sum_{i<j}a_ia_j\bigg)x^2
=\bigg(\frac{n\!-\!|\a|}{2}\bigg)^2x^2
=w_1(L_{\tau}^{\ti\phi_{\tau}})^2\,,\quad
w_2(X_{2n;\a}^{\eta_{2n}})=0=w_1(L_{\eta}^{\ti\phi_{\eta}})^2,$$
where $x$ is the restriction of the generator of $H^1(\R\P^{n-1};\Z_2)$ to 
$X_{n;\a}^{\tau_n}$ in the first case;
the middle equality in the first case  above follows from 
the numerical assumptions in Corollary~\ref{CIorient_crl}(1). 
Therefore,  $(X_{n;\a},\om_{n;\a},\tau_{n;\a})$ and $(X_{2n;\a},\om_{2n;\a},\eta_{2n;\a})$
are real-orientable in the sense  of Definition~\ref{realorient_dfn}
under the assumptions in~(1) and~(2), respectively,
of Corollary~\ref{CIorient_crl}.
The claim thus follows from Theorem~\ref{thm_maps} if the genus of~$\Si$ is at least~2
and from \cite[Theorems~1.1,~1.2]{GZ2} otherwise.
\end{proof}

\section{Examples: real Hurwitz covers}
\label{HurCov_sec}

\noindent
For $d\!\in\!\Z^+$, a \sf{degree~$d$ Hurwitz cover} of a closed connected Riemann surface~$(\Si_0,\fJ_0)$ is a holomorphic map $u\!:(\Si,\fJ)\!\lra\!(\Si_0,\fJ_0)$,
where $(\Si,\fJ)$ is another connected Riemann surface, such that 
$$u_*[\Si]_{\Z}=d[\Si_0]_{\Z}\in H_2(\Si_0;\Z).$$
By Riemann-Hurwitz \cite[p218]{GH}, such a map $u$ has 
\BE{RHdfn_e} 2b\equiv 2(d-1+g-dg_0)\,,\EE
branched points, counting multiplicity.
If $\si_0$ is an anti-holomorphic involution on~$(\Si_0,\fJ_0)$, 
a \sf{real Hurwitz cover} of $(\Si_0,\si_0,\fJ_0)$ is a real holomorphic map 
\BE{RHurmap_e} u\!:(\Si,\si,\fJ)\lra(\Si_0,\si_0,\fJ_0),\EE
i.e.~$u\!\circ\!\si\!=\!\si_0\!\circ\!u$.
In this section, we show that many moduli spaces 
$\fM(\Si_0,d;\fJ_0)^{\si_0,\si}$ of Hurwitz covers,
with a fixed complex target and a fixed topological domain,
are orientable; see Theorem~\ref{HurCov_thm}.

\begin{eg}\label{HurCov_eg1}
Let $(\Si_0,\si_0,\fJ_0)$ be a genus~$g_0$ symmetric Riemann surface such that 
$\Si_0^{\si_0}\!=\!\eset$ and $(\Si,\si)$ be a genus~$g$ symmetric surface.
If there exists a degree~$d$ Hurwitz cover as in~\eref{RHurmap_e}, then
$$d-1+g-dg_0\in 2\Z.$$
In particular, if $g$ is even, then $g_0$ is even and $d$ is odd.
\end{eg}

\begin{proof}
(1) Suppose first that $g_0$ is odd.
By \cite[Theorem~1.2]{Nat}, there are two disjoint circles \hbox{$C_1',C_2'\!\subset\!\Si_0$}
that are preserved by~$\si_0$ and split~$\Si_0$ into bordered surfaces
interchanged by~$\si_0$.
Similarly to the case $\Si_0\!=\!\bT$, these two circles can be replaced by two circles
$C_1,C_2\!\subset\!\Si_0$ that are interchanged by~$\si_0$ and still split~$\Si_0$ 
into bordered surfaces~$\Si_0^+$ and~$\Si_0^-$ interchanged by~$\si_0$.
The preimages of~$\Si_0^+$ and~$\Si_0^-$ split~$\Si$ into bordered surfaces
$\Si_1^+,\ldots,\Si_k^+$ and $\Si_1^-,\ldots,\Si_k^-$, which are interchanged by~$\si$.
Let $m_i^+$ be the number of boundary components of~$\Si_i^+$.
The set of boundary components of all these surfaces can be grouped into quadruples: 
pairs of them
are identified and mapped to~$C_1$ and conjugate pairs of them are mapped to~$C_2$ 
by~$\si$.
Thus, the Euler characteristic of~$\Si$, 
$$\chi(\Si)=2\big(\chi(\Si_1^+)+\ldots+\chi(\Si_k^+)\big)
=4\big(1\!-\!g(\Si_1^+)+\ldots+1\!-\!g(\Si_k^+)\big)
-2\big(m_1^++\ldots+m_k^+\big),$$
is divisible by~4.
This establishes the claim for $g_0$ odd.\\

\noindent
(2) Suppose that $g_0$ is even.
By \cite[Theorem~1.2]{Nat}, there is a circle $C\!\subset\!\Si_0$
which is preserved by~$\si_0$ and splits~$\Si_0$ 
into bordered surfaces~$\Si_0^+$ and~$\Si_0^-$ interchanged by~$\si_0$.
The preimages of~$\Si_0^+$ and~$\Si_0^-$ split~$\Si$ into bordered surfaces
$\Si_1^+,\ldots,\Si_k^+$ and $\Si_1^-,\ldots,\Si_k^-$, which are interchanged by~$\si$.
The set of boundary components of all these bordered surfaces 
can be grouped into sets of two types:
\begin{enumerate}[leftmargin=8mm]
\item[($-$)] pairs, in which each element corresponds to a circle in $\Si$ preserved by~$\si$;
\item[($+$)] quadruples, in which two elements correspond to a circle in $\Si$
not preserved by~$\si$
and the other two elements correspond to its image under~$\si$.
\end{enumerate}
We denote the number of pairs of the first type by $m^-$ and the number of
quadruples of the second type by~$m^+$.
The restriction of~$u$ to a boundary in~($-$) is a map $S^1\!\lra\!S^1$
commuting with the antipodal involution and must be of odd degree.
The restrictions of~$u$ to boundaries in~($+$) in the same quadruple are maps of 
the same degree.
Thus, $d\!\equiv\!m^-\mod2$.
Since 
$$\chi(\Si)=2\big(\chi(\Si_1^+)+\ldots+\chi(\Si_k^+)\big)
=4\big(1\!-\!g(\Si_1^+)+\ldots+1\!-\!g(\Si_k^+)\big)
-2m^-+4m^+,$$
the genus of $\Si$ is odd if $m^-$ and $d$ are even;
the genus of $\Si$ is even if $m^-$ and $d$ are odd.
This establishes the claim for $g_0$ odd.
\end{proof}

\begin{eg}\label{HurCov_eg2}
We now describe real Hurwitz covers of~$\P^1$.
Let
$$\tau\!\equiv\!\tau_1,\eta\!\equiv\!\eta_1\!:\,\P^1\lra\P^1, 
\qquad \tau(z)=\bar{z},\quad \eta(z)=-1/\bar{z}\,,$$
be the standard representatives of the two equivalence classes of anti-holomorphic
involutions on~$\P^1$.
Denote by $\oD\!\subset\!D^2$ the interior of~$D^2$ and by $\bar\R\!\subset\!\P^1$
the closure of~$\R\!\subset\!\C$.
For each $m\!\in\!\Z^{\ge0}$, set
\begin{equation*}\begin{split}
U_m(\eta)&=\big\{(z_1,\ldots,z_m)\!\in\!(\P^1)^m\!:
\,z_i\!\neq\!z_j,\eta(z_j)~\forall\,i\!\neq\!j\big\},\\
U_m&=\big\{(z_1,\ldots,z_m)\!\in\!(\oD)^m\!:\,z_i\!\neq\!z_j~\forall\,i\!\neq\!j\big\},
\end{split}\end{equation*}
and let
$$U_m^{\R}\subset \big\{(x_1,\ldots,x_{2m})\!\in\!\bar\R^{2m}\!:
\,x_i\!\neq\!x_j~\forall\,i\!\neq\!j\big\}$$
so that $x_2$ follows $x_1$, $x_3$ follows $x_2$, etc., with respect to the positive 
direction on~$\bar\R$.
The $m$-th symmetric group~$\bS_m$ acts freely on~$U_m$ by interchanging the coordinates.
The $m$-th cyclic group 
$\Z_m$ acts on~$U_m^{\R}$ by cyclically permuting pairs of consequence coordinates,
i.e.
$$(x_1,\ldots,x_{2m}) \lra (x_3,x_4,\ldots,x_{2m-1},x_{2m},x_1,x_2).$$
Let $\bS_m'$ be the group of automorphisms of~$U_m(\eta)$ generated by 
the interchanges of the coordinates and the maps $z_i\!\lra\!-1/\bar{z}_i$
on each coordinate separately
and $\fU_m(\eta)$ be the orientation double of~$U_m(\eta)/\bS_m'$.
If $(\Si,\si)$ is a genus~$g$ symmetric surface,
\begin{alignat}{1}
\label{tau2cov_e}
\fM(\P^1,2;\fJ_0)^{\tau,\si} &\approx\begin{cases} 
U_{g+1}/\bS_{g+1}\sqcup U_1^{\R}\!\times\!(U_g/\bS_g),&\hbox{if}~
|\pi_0(\Si^{\si})|=1,\\
(U_k^{\R}/\Z_k)\!\times\!(U_{g+1-k}/\bS_{g+1-k}),&
\hbox{if}~k=|\pi_0(\Si^{\si})|\neq1;
\end{cases}\\
\label{eta2cov_e}
\fM(\P^1,2;\fJ_0)^{\eta,\si}&\approx\begin{cases} 
\fU_{g+1}(\eta),&\hbox{if}~\Si^{\si}\!=\!\eset,~g\!\not\in\!2\Z,\\
\eset,&\hbox{otherwise}.
\end{cases}
\end{alignat}
In particular, all these moduli spaces are orientable.
\end{eg}

\begin{proof}
By \cite[p254]{GH}, every degree~2 Hurwitz cover $\Si\!\lra\!\P^1$ not branched over 
$\i\!\in\!\P^1$ can be written~as
$$u\!:\Si'=\big\{(z,w)\!\in\!\C^2:\,w^2\!=\!(z\!-\!z_1)\!\ldots\!(z\!-\!z_{2g+2})\big\}
\lra\C, \qquad (z,w)\lra z,$$
where $z_1,\ldots,z_{2g+2}\!\in\!\C$ are the distinct branched points;
$\Si'$ is completed to the closed surface~$\Si$ by gluing two copies of 
a small disk around $\i\!\in\!\P^1$.\\

\noindent
(1) If $u\!:(\Si,\si,\fJ)\!\lra\!(\P^1,\tau,\fJ_0)$ is a Hurwitz double cover, 
the set of branched points 
\hbox{$z_1,\ldots,z_{2g+2}\!\in\!\C$} is preserved by the involution~$\tau$.
Thus, we can assume~that
$$(z_1,\ldots,z_{2k})\in U_k^{\R} \qquad\hbox{and}\qquad
(z_{2k+1},z_{2k+3},\ldots,z_{2g+1})\in U_{g+1-k}$$
for some $k\!=\!0,1,\ldots,g\!+\!1$.
There are two lifts of~$\tau$ to an involution on~$\Si$:
$$\tau_{\pm}\!: \Si\lra\Si, \qquad (z,w)\lra(\bar{z},\pm\bar{w}).$$
If $k\!\in\!\Z^+$, the fixed locus of the involution~$\tau_+$ consists of~$k$ 
circles containing $\{z_{2k},z_1\}$ and $\{z_{2i},z_{2i+1}\}$ with $i\!=\!1,\ldots,k\!-\!1$,
while the fixed locus of the involution~$\tau_-$ consists of~$k$ 
circles containing $\{z_{2i-1},z_{2i+1}\}$ with $i\!=\!1,\ldots,k$.
If $k\!=\!0$, $\Si^{\tau_+}$ is a single circle, while $\Si^{\tau_-}\!=\!\eset$.
This establishes~\eref{tau2cov_e}.
Since the actions of $\bS_m$ on~$U_m$ and of $\Z_m$ on~$U_m^{\R}$ 
are free and orientation-preserving, the quotients in~\eref{tau2cov_e} are orientable.\\

\noindent
(2) If $u\!:(\Si,\si,\fJ)\!\lra\!(\P^1,\eta,\fJ_0)$ is a Hurwitz double cover, 
the set of branched points $z_1,\ldots,z_{2g+2}\!\in\!\C$ is preserved by 
the involution~$\tau$.
Thus, we can assume that 
$$(z_1,z_2,\ldots,z_{g+1})\in U_{g+1}(\eta)\,, 
\qquad z_{g+1+i}=-1/\bar{z}_i~~\forall\,i=1,\ldots,g\!+\!1\,.$$
There are precisely two lifts of~$\eta$ to an automorphism on~$\Si$:
\BE{etapm_e}\eta_{\pm}\!: \Si\lra\Si, \qquad 
(z,w)\lra\big(-1/\bar{z},(z_1\ldots z_{2g+2})^{1/2}\bar{w}/\bar{z}^{g+1}\big).\EE
These automorphisms are of order 4 if $g$ is even and are involutions if $g$ is odd.
This establishes the second case in~\eref{eta2cov_e} and shows that 
$\fM(\P^1,2;\fJ_0)^{\eta,\si}$ is some double cover of $U_{g+1}(\eta)/\bS_{g+1}'$
in the first case  in~\eref{eta2cov_e}.\\

\noindent
The automorphisms of~$U_{g+1}(\eta)$ interchanging the coordinates are orientation-preserving,
while those conjugating them are orientation-reversing.
Thus, $w_1$ of $U_{g+1}(\eta)/\bS_{g+1}'$ is supported on the loops 
generated by paths in~$U_{g+1}(\eta)$ from $z_i$ to $-1/\bar{z}_i$, such~as 
$$\bI\lra U_{g+1}(\eta), \qquad
t\lra (z_1,\ldots,z_{i-1},(1\!-\!t+t/|z_i|^2)\ne^{\pi\fI t}z_i,z_{i+1},\ldots,z_{g+1}\big).$$
The involution~\eref{etapm_e} along this loop is given~by
$$(z,w)\lra\big(-1/\bar{z},\ne^{\pi\fI t}(z_1\ldots z_{2g+2})^{1/2}\bar{w}/\bar{z}^{g+1}\big),
\qquad t\in\bI,$$
i.e.~this loop lifts to a non-closed path in $\fM(\P^1,2;\fJ_0)^{\eta,\si}$.
Thus, $\fM(\P^1,2;\fJ_0)^{\eta,\si}$ is orientable, and so is 
the orientation double cover of~$U_{2g+2}/\bS_{2g+2}'$.
\end{proof}

\begin{thm}\label{HurCov_thm}
Let $(\Si_0,\si_0)$ and $(\Si,\si)$ be symmetric surfaces and 
$\fJ_0$ be a complex structure on~$\Si_0$ such that $\si_0^*\fJ_0\!=\!-\fJ_0$.
The moduli space $\fM(\Si_0,d;\fJ_0)^{\si_0,\si}$ of degree~$d$ real  Hurwitz covers
$$(\Si,\si,\fJ)\lra(\Si_0,\si_0,\fJ_0)$$ 
is orientable~if
either $(\Si_0,\si_0)\!=\!(\P^1,\tau)$, or $\Si_0\!=\!\bT$, or  
$(\Si_0,\si_0)\!=\!(\P^1,\eta)$ and $d\!\le\!2$.
\end{thm}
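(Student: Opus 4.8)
The plan is to identify $\fM(\Si_0,d;\fJ_0)^{\si_0,\si}$ with a real Gromov-Witten moduli space and then invoke the results of the preceding sections. A positive-degree $\fJ_0$-holomorphic map between closed Riemann surfaces is a branched cover whose domain complex structure is forced to be the pullback of $\fJ_0$; hence $\fM(\Si_0,d;\fJ_0)^{\si_0,\si}$ coincides with $\fM_g(\Si_0,d[\Si_0]_{\Z};\fJ_0)^{\si_0,\si}$, where the domain genus~$g$ is determined by~\eref{RHdfn_e}. The target has complex dimension $n\!=\!1$, which is \emph{odd}, so it is part~\ref{it_maps1} of Theorem~\ref{thm_maps} that is relevant. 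A generic real Hurwitz cover has $2b$ simple branch points forming a $\si_0$-invariant configuration, and the real dimension $2b$ of the resulting family equals the real virtual dimension of $\fM_g(\Si_0,d[\Si_0]_{\Z};\fJ_0)^{\si_0,\si}$; thus $\fJ_0$ is regular on the dense smooth locus, whose complement (covers with higher-order branching) has real codimension at least~$2$ and so does not affect $w_1$. Consequently Theorem~\ref{thm_maps}\ref{it_maps1} may be applied with $J\!=\!\fJ_0$ once $g\!\ge\!2$, while the cases $g\!\le\!1$ are covered by the genus~$0$ and~$1$ analogues \cite[Theorems~1.1,~1.2]{GZ}.

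It then remains to check that the targets in the first two cases are real-orientable in the sense of Definition~\ref{realorient_dfn}. For $(\P^1,\tau)$ the isomorphism $(T\P^1,\tnd\tau)\!\approx\!(\cO_{\P^1}(1),\ti\tau)^{\otimes2}$ provides a rank~1 real bundle pair whose square is $\La_{\C}^{\top}(T\P^1,\tnd\tau)$, while for $\Si_0\!=\!\bT$ the pair $\La_{\C}^{\top}(T\bT,\tnd\si_0)$ is trivial, and hence a square, because $\bT$ is Kahler Calabi-Yau and \cite[Lemma~3.1]{Teh} applies to every anti-holomorphic involution~$\si_0$. In both cases the fixed locus $\Si_0^{\si_0}$ is empty or a disjoint union of circles, so $w_2(\Si_0^{\si_0})\!=\!0$ and $w_1(L^{\ti\phi})^2\!=\!0$ automatically, verifying~\eref{realorient_e}. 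This settles $(\P^1,\tau)$ and $\bT$ for every topological type of $\si_0$ and of the domain involution~$\si$; note in particular that real-orientability of the target is insensitive to the fact that $\bT/\si_0$ may be non-orientable.

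For $(\Si_0,\si_0)\!=\!(\P^1,\eta)$ this argument breaks down: the fixed-point-free involution~$\eta$ does not lift to a conjugation on $\cO_{\P^1}(1)$, so $(T\P^1,\tnd\eta)\!=\!(\cO_{\P^1}(2),\ti\eta)$ has no real square root and Theorem~\ref{thm_maps} is inapplicable, which is exactly what forces the restriction $d\!\le\!2$ and a direct treatment. For $d\!=\!1$ a cover is an isomorphism, the domain is forced to be $(\P^1,\eta)$, and the moduli space is a point. For $d\!=\!2$ I would simply invoke the explicit description~\eref{eta2cov_e} of Example~\ref{HurCov_eg2}, where the space is shown to be empty or the double cover $\fU_{g+1}(\eta)$ whose orientability is established by checking that the loops carrying $w_1$ of $U_{g+1}(\eta)/\bS_{g+1}'$ lift to non-closed paths.

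The main obstacle is conceptual: orientability of the Hurwitz space cannot be deduced from the orientability of the configuration space of branch points, since for $\bT$ with the non-separating involutions and for $(\P^1,\eta)$ the quotient $\Si_0/\si_0$ is non-orientable, so the ``covering of an orientable base'' heuristic fails. The force of Theorem~\ref{thm_maps} is precisely that it replaces this non-orientable base by the orientable determinant-line data over $\cM_{g,l}^{\si}$ supplied by Proposition~\ref{prp_domains}; the genuinely resistant case is $(\P^1,\eta)$, where no real square root exists and even this substitute is unavailable, leaving the hands-on computation of Example~\ref{HurCov_eg2}, valid only for $d\!\le\!2$, as the only route.
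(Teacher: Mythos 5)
Your proposal is correct and takes essentially the same route as the paper: verify Definition~\ref{realorient_dfn} for $(\P^1,\tau)$ via the real Euler sequence and for $\bT$ via the triviality of $(T\bT,\tnd\si_0)$ as a real bundle pair (the $w_2$ and $w_1^2$ conditions being vacuous since the fixed locus is at most $1$-dimensional), apply Theorem~\ref{thm_maps} together with its genus~$0$ and~$1$ analogues, and fall back on the explicit description of Example~\ref{HurCov_eg2} for $(\P^1,\eta)$ with $d=2$; your added transversality remark about unobstructedness of Hurwitz covers is a detail the paper leaves implicit. The one small discrepancy is at $(\P^1,\eta)$, $d=1$, where the paper quotes $\fM(\P^1,1;\fJ_0)^{\eta,\eta}\approx\R\P^3$ from \cite[Appendix~A.1]{Teh} rather than declaring the space a point (a matter of the parametrization convention), but both are orientable, so the conclusion is unaffected.
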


\begin{proof}
(1) Suppose $(\Si_0,\si_0)\!=\!(\P^1,\tau)$.
The involution $\tau$ naturally lifts to an involution~$\ti\tau$ on $\cO_{\P^1}(1)$,
so that the usual Euler sequence for~$\P^1$ becomes a short exact sequence
of real bundle pairs: 
$$0\lra (\P^1\!\times\!\C,\tau\!\times\!\id_{\C})\lra
2\big(\cO_{\P^1}(1),\ti\tau\big)\lra (T\P^1,\tnd\tau)\lra0,$$
where $\fc_{\C}$ is the standard conjugation on~$\C$.
Thus,
$$\La_{\C}^{\top}(T\P^1,\tnd\tau)\approx \big(\cO_{\P^1}(1),\ti\tau\big)^{\otimes2}.$$
Since $(\P^1)^{\tau}\!=\!S^1$ is one-dimensional, $(\P^1)^{\tau}$ is orientable and
$$w_2(T(\P^1)^{\tau})=0=w_1\big(\cO_{\P^1}(1)^{\ti\tau}\big)^2\in H^2((\P^1)^{\tau};\Z_2)=\{0\}.$$
Thus, the conclusion in this case follows from \cite[Theorems~1.1,~1.2]{GZ2}
and Theorem~\ref{thm_maps}.\\

\noindent
(2) Suppose $\Si_0\!=\!\bT\!=\!\R^2/\Z^2$.
Since any complex structure on~$\Si_0$ is Kahler, 
the line bundle $(T\bT,\tnd\si_0)$ admits a real square root;
see \cite[Lemma~1.5]{Teh}.
In this case, this can be explicitly seen as follows.
There are three equivalence classes of orientation-reversing involutions on~$\bT$:
$$\ft_0,\ft_1,\ft_2\!:\bT\lra \bT, \qquad
\ft_0(u,v)=(u,-v),\quad \ft_1(u,v)=(v,u),\quad \ft_2(u,v)=(u+\frac12,-v);$$
see \cite[Section~9]{AG}, for example.
In all three cases, the tangent bundle is trivial as a real bundle pair:
\begin{alignat*}{2}
(T\bT,\tnd\ft_0)&\lra(\bT\!\times\!\C,\ft_0\!\times\!\fc_{\C}), &\qquad
(u,v,u',v')&\lra(u,v,u'+\fI v'),\\
(T\bT,\tnd\ft_1)&\lra(\bT\!\times\!\C,\ft_1\!\times\!\fc_{\C}), &\qquad
(u,v,u',v')&\lra (u,v,(u'\!+\!v')+\fI(u'\!-\!v')),\\
(T\bT,\tnd\ft_2)&\lra(\bT\!\times\!\C,\ft_2\!\times\!\fc_{\C}),&\qquad
(u,v,u',v')&\lra (u,v,u'+\fI v').
\end{alignat*}
In particular,
$$(T\bT,\tnd\ft_k)\approx (\bT\!\times\!\C,\ft_k\!\times\!\fc_{\C})^{\otimes2}
\qquad\forall\,k\!=\!0,1,2.$$
Since $\bT^{\ft_k}$ is one-dimensional (and consists of $2\!-\!k$ circles), 
$\bT^{\ft_k}$ is orientable and
$$w_2(T\bT^{\ft_k})=0=w_1\big((\bT\!\times\!\C)^{\ft_k\times\fc_{\C}}\big)^2
\in H^2(\bT^{\ft_k};\Z_2)=\{0\}.$$
Thus, the conclusion in this case also follows from \cite[Theorems~1.1,~1.2]{GZ2}
and Theorem~\ref{thm_maps}.\\

\noindent
(3) A degree 1 map $(\Si,\si,\fJ)\!\lra\!(\P^1,\eta,\fJ_0)$ is an isomorphism,
and so it is sufficient to assume that $(\Si,\si)\!=\!(\P^1,\eta)$ in 
the degree~1 case.
By the explicit description in \cite[Appendix~A.1]{Teh},
$$\fM(\P^1,1;\fJ_0)^{\eta,\eta}\approx\R\P^3\,.$$
The $d\!=\!2$ case for $(\Si_0,\si_0)\!=\!(\P^1,\eta)$ is addressed 
by Example~\ref{HurCov_eg2}.
\end{proof}

\begin{rmk}\label{HurCov_rmk}
The conclusions of Theorem~\ref{HurCov_thm} for $\Si\!=\!\P^1$ and
$\si_0,\si\!=\!\tau,\eta$, without any degree restrictions, 
are implied by \cite[Theorem~1.1]{GZ2} and
are obtained in  \cite[Appendix~A.1]{Teh} by explicitly describing 
$\fM(\P^1,d;\fJ_0)^{\si_0,\si}$.
At this point, we are unaware of any non-orientable moduli spaces 
$\fM(\Si,d;\fJ_0)^{\si_0,\si}$.
It would be interesting to know which of the spaces $\fM(\Si,d;\fJ_0)^{\si_0,\si}$
are orientable (if not all of them are) and which of them are empty
and to obtain analogues of Theorem~\ref{HurCov_thm} and Example~\ref{HurCov_eg1},
respectively, in the most general situation.
This appears to be a purely combinatorial problem about Hurwitz covers.
\end{rmk}

\appendix

\section{Extensions of Theorem~\ref{thm_maps}}
\label{extens_app}

\noindent
In this appendix, we describe an extension of Theorem~\ref{thm_maps}; see
Theorem~\ref{thm_maps2} below.
We make use of what can be seen as an alternative formulation of 
\cite[Proposition~1.2]{Remi},   
which appears to have broader applications to the orientability problem
than  Proposition~\ref{bndlsplit_prp0};
see Lemma~\ref{ext_lmm} below.
The proof of Lemma~\ref{ext_lmm} consists of two main parts.
The first involves reduces the relevant sign computation for a vector bundle
isomorphism over an arbitrary diffeomorphism of~$(\Si,\si)$ to 
a sign computation for an isomorphism over the identity on~$\Si$;
the idea behind this step comes entirely from~\cite{Remi}.
The second part of the proof is handled in completely different ways
in~\cite{Remi} and below: 
the argument in~\cite{Remi} relies on a technical computation at the heart
of~\cite{Remi0}, while ours makes use of a more topological sign computation  
in~\cite{GZ2}.\\

\noindent
It is not clear to us at this point how useful the extension described in this 
appendix is.
In particular, it does not enlarge the class of the complete intersections
$X_{n;\a}\!\subset\!\P^n$ to which Corollary~\ref{CIorient_crl} applies for all 
$B\!\in\!H_2(X;\Z)$.
For the classes of the form $B\!=\!2B'$, with $B'\!\in\!H_2(X;\Z)$,
Theorem~\ref{thm_maps2} does extend the conclusion of Corollary~\ref{CIorient_crl}(1) 
to all complete intersections~with $a_1\!+\!\ldots\!+\!a_k\equiv n\mod2$.
However, \cite[Section~2]{Ge2} implies that natural partial compactifications
of these spaces are not generally orientable in the new cases 
of Corollary~\ref{CIorient_crl}(1) provided by Theorem~\ref{thm_maps2}.

\begin{thm}\label{thm_maps2} 
Let $(X,\om)$ be a symplectic $2n$-manifold with an involution~$\phi$ and
$B$, $J$, $l$, and $(\Si,\si)$ be as in the statement of Theorem~\ref{thm_maps}.
If there exist a real bundle pair $(E,\ti\phi_E)\!\lra\!(X,\phi)$ such~that
\BE{realorient_e1}w_2(TX^{\phi})=w_1(E^{\ti\phi_E})^2
\qquad\hbox{and}\qquad 
\frac12\lr{c_1(TX),B}+\lr{c_1(E),B}\in2\Z\EE
and a rank~1 real bundle pair $(L,\ti\phi_L)\!\lra\!(X,\phi)$ such~that 
\BE{realorient_e2}
\La_{\C}^{\top}(TX,\tnd\phi)=(L,\ti\phi_L)^{\otimes 2},\EE
then the two conclusions of Theorem~\ref{thm_maps} still hold.
Furthermore, \eref{realorient_e1} alone suffices if $\Si\!-\!\Si^{\si}$ is disconnected,
while \eref{realorient_e2} alone suffices if $\Si^{\si}\!=\!\eset$.
\end{thm}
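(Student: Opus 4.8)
The plan is to adapt the proof of Theorem~\ref{thm_maps}, with the single line bundle $(L,\ti\phi)$ there — which simultaneously supplies a square root of the anticanonical bundle and controls $w_2(TX^{\phi})$ — replaced by the two pairs $(E,\ti\phi_E)$ and $(L,\ti\phi_L)$ of \eref{realorient_e1} and \eref{realorient_e2}, now carrying these two roles separately. First I would enlarge the number~$l$ of interior marked points so that $\cM_{g,l}^{\si}$ is a manifold (Remark~\ref{mfld_rem}) and reduce, via $w_1(\fM_{g,l}(X,B)^{\phi,\si})=w_1(\det D_{(TX,\tnd\phi)})+\mathfrak{f}^*w_1(\cM_{g,l}^{\si})$, to establishing the single identity $w_1(\det D_{(TX,\tnd\phi)})=n\,\mathfrak{f}^*w_1(\cM_{g,l}^{\si})$ over $\cH_{g,l}(X,B)^{\phi,\si}$. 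Both conclusions then follow exactly as in Theorem~\ref{thm_maps}: the forgetful map in the marked points reduces small~$l$ to large~$l$, and $(n\!+\!1)\mathfrak{f}^*w_1(\cM_{g,l}^{\si})$ vanishes once $n$ is odd.

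Next I would set $(W,\ti\phi)=(TX\oplus2E,\tnd\phi\oplus2\ti\phi_E)$. By \eref{sum} there is a canonical isomorphism $\det D_{(W,\ti\phi)}\approx\det D_{(TX,\tnd\phi)}\otimes(\det D_{(E,\ti\phi_E)})^{\otimes2}$, and since the square is canonically oriented, $w_1(\det D_{(W,\ti\phi)})=w_1(\det D_{(TX,\tnd\phi)})$. The purpose of this stabilization by~$2E$ is that $W^{\ti\phi}=TX^{\phi}\oplus2E^{\ti\phi_E}$ is both orientable and spin: $TX^{\phi}$ is orientable because the real part of $\La_\C^{\top}(TX,\tnd\phi)$ computes $w_1(TX^{\phi})$ and equals $(L^{\ti\phi_L})^{\otimes2}$ by \eref{realorient_e2}, so that $w_1(TX^{\phi})=2w_1(L^{\ti\phi_L})=0$; and the first equation of \eref{realorient_e1} gives $w_2(W^{\ti\phi})=w_2(TX^{\phi})+w_1(E^{\ti\phi_E})^2=2w_1(E^{\ti\phi_E})^2=0$. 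Pulling $(W,\ti\phi)$ back by the evaluation map over the mapping torus $(M_G,\si_G)$ of any loop in $\fM_{g,l}(X,B)^{\phi,\si}$, Proposition~\ref{bndlsplit_prp} then yields $w_1(\det\dbar_{(W,\ti\phi)})+w_1(\det\dbar_{(\La_\C^{\top}(W,\ti\phi))^*})=(\rk_\C W+1)\,w_1(\det\dbar_\C)=(n\!+\!1)\,w_1(\det\dbar_\C)$, using $\rk_\C W\equiv n\bmod2$.

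The decisive step is to evaluate $w_1(\det\dbar_{(\La_\C^{\top}(W,\ti\phi))^*})$. By \eref{realorient_e2} one has $\La_\C^{\top}(W,\ti\phi)\approx(M,\ti\phi_M)^{\otimes2}$ for the rank~1 real bundle pair $(M,\ti\phi_M)=(L,\ti\phi_L)\otimes\La_\C^{\top}(E,\ti\phi_E)$, whose degree $\lr{c_1(M),B}=\frac12\lr{c_1(TX),B}+\lr{c_1(E),B}$ is \emph{even} by the second condition in \eref{realorient_e1}. In Theorem~\ref{thm_maps} the analogous pair $M=L^{\otimes2}$ automatically had orientable real part, so Proposition~\ref{bndlsplit_prp} could be applied directly to $2M$; here $w_1(M^{\ti\phi_M})=w_1(L^{\ti\phi_L})+w_1(E^{\ti\phi_E})$ may be nonzero and that route is blocked. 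Instead I would invoke Lemma~\ref{ext_lmm}, the reformulation of \cite[Proposition~1.2]{Remi}, which asserts precisely that the $\dbar$-determinant line of the square of a rank~1 real bundle pair of even degree over $(M_G,\si_G)$ has the same first Stiefel-Whitney class as that of the trivial pair, with no spin hypothesis on the real part. This gives $w_1(\det\dbar_{(\La_\C^{\top}(W,\ti\phi))^*})=w_1(\det\dbar_{(M^*,\ti\phi_M^*)^{\otimes2}})=w_1(\det\dbar_\C)$, whence $w_1(\det\dbar_{(W,\ti\phi)})=n\,w_1(\det\dbar_\C)$ and, by Proposition~\ref{prp_domains}, $w_1(\det D_{(TX,\tnd\phi)})=n\,\mathfrak{f}^*w_1(\cM_{g,l}^{\si})$, as required.

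It remains to dispose of the two refinements and to identify the principal difficulty. If $\Si\!-\!\Si^{\si}$ is disconnected, the comparison of $\La_\C^{\top}(W,\ti\phi)$ with the trivial pair used above is furnished directly by \cite[Theorem~1.4]{GZ2} without assuming a square root, so \eref{realorient_e2} may be dropped and \eref{realorient_e1} alone suffices; if $\Si^{\si}=\eset$, then $M_G^{\si_G}=\eset$, the first equation of \eref{realorient_e1} is vacuous, and the boundary circles of a doubling sh-surface that carry the degree obstruction in Lemma~\ref{ext_lmm} are absent, so the even-degree condition is unnecessary and \eref{realorient_e2} alone suffices, recovering the orientability underlying Theorem~\ref{thm_inv}; in both regimes the genus~$0$ and~$1$ cases are covered by \cite[Theorems~1.1,~1.2]{GZ2}. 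The main obstacle is the proof of Lemma~\ref{ext_lmm} itself, namely extending the ``square $\cong$ trivial'' sign comparison of \cite{GZ2} from diffeomorphisms preserving a half-surface to an arbitrary real diffeomorphism~$G$. Following the two-step strategy announced in this appendix, I would first use the reduction of \cite{Remi} to replace the monodromy sign of~$G$ by a sign over the identity diffeomorphism of~$\Si$, and then evaluate that sign topologically through the computations of \cite{GZ2}, in place of the analytic argument of \cite{Remi0}.
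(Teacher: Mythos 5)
Your architecture is the paper's: stabilize $TX$ by $2E$ so that $w_1(\det D_{(TX,\tnd\phi)})$ is unchanged, note that $(TX\oplus 2E)^{\tnd\phi\oplus2\ti\phi_E}$ is orientable and spin under \eref{realorient_e1}--\eref{realorient_e2} (the paper runs this step through Corollary~\ref{ext_crl} rather than Proposition~\ref{bndlsplit_prp}, but under the combined hypotheses the two are interchangeable), and reduce everything to comparing $\det\dbar$ on $\La_{\C}^{\top}(TX\oplus2E,\tnd\phi\oplus2\ti\phi_E)=(M,\ti\phi_M)^{\otimes2}$, $M=L\otimes\La_{\C}^{\top}E$, with the trivial pair; your handling of the two ``furthermore'' refinements also agrees with the paper's. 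However, your decisive step misquotes Lemma~\ref{ext_lmm} and, as written, would fail. Lemma~\ref{ext_lmm} does \emph{not} assert that the square of an even-degree rank-1 real bundle pair has $\dbar$-determinant $w_1$ equal to the trivial pair's: it carries the hypothesis $c_1(W)|_{\Si_s}=0$, which fails for $W=M^{\otimes2}$ whenever $\lr{c_1(TX),B}+2\lr{c_1(E),B}\neq0$ (evenness of $\lr{c_1(M),B}$ is weaker than its vanishing), and for a rank-1 pair its conclusion is a tautology, since then $\La_{\C}^{\top}(W,\ti\phi)=(W,\ti\phi)$ and the coefficient $n-1$ vanishes.

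The paper instead applies Corollary~\ref{ext_crl} to the rank-2 pair $2\,\ev^*(M,\ti\phi_M)$ (equivalently, Lemma~\ref{ext_lmm} to $2M\oplus(M^{\otimes2})^*$, which does have fiberwise trivial $c_1$); since $\det\dbar_{2\ev^*M}$ is canonically oriented, this yields the needed comparison for $(M^{\otimes2})^*$ \emph{provided} $\sum_i\blr{w_1(M^{\ti\phi_M})^2,[(M_G^{\si_G})_i]_{\Z_2}}=0$. Verifying this hypothesis is the one genuinely new computation in the paper's proof and is the piece your proposal black-boxes: by the computation in the proof of \cite[Corollary~3.3]{GZ2}, the $w_1^2$-pairings over the mapping-torus fixed components equal $\sum_i\blr{w_1(M^{\ti\phi_M}),[(\Si^{\si})_i]_{\Z_2}}$, and by \cite[Propositions~4.1,~4.2]{BHH} this is the mod-2 degree $\lr{c_1(M),B}=\frac12\lr{c_1(TX),B}+\lr{c_1(E),B}$, which is even exactly by the second condition in \eref{realorient_e1}. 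You correctly identified that the parity condition enters at this point, so with your citation of Lemma~\ref{ext_lmm} replaced by Corollary~\ref{ext_crl} on $2\,\ev^*M$ and this bridging computation inserted, your argument coincides with the paper's.
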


\begin{lmm}[{\cite[Proposition~1.2]{Remi}}]\label{ext_lmm}
Let $( \Si,\si)$ be a symmetric surface,
$G\!:( \Si,\si)\!\lra\!( \Si,\si)$ be a real orientation-preserving diffeomorphism,
$(W,\ti\phi)$ be a rank~$n$ real bundle pair over~$(M_G,\si_G)$, and $(M_G^{\si_G})_i$, for $i\!=\!1,\ldots,m$, be the fixed components of $\si_G$.
If 
$c_1(W)|_{ \Si_s}\!=\!0$ for any $s\!\in\!\bI$ and
\BE{extlmm_e0}\sum_{i=1}^m \blr{w_2(W^{\ti\phi}),[(M_G^{\si_G})_i]_{\Z_2}}=0,\EE 
then 
\BE{extlmm_e} w_1(\det\dbar_{(W,\ti\phi)}) = w_1(\det\dbar_{\La_{\C}^{\top}(W,\ti\phi)})+
(n\!-\!1)\,w_1(\det\dbar_{(M_G\!\times\!\C,\si_G\!\times\!\fc_{\C})}).\EE
\end{lmm}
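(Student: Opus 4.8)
I would phrase the whole statement as an equality in $H^1(S^1;\Z_2)\!=\!\Z_2$ of the monodromies of the three determinant line bundles over the base circle $\bI/\!\!\sim$ of the mapping torus, with $S^1$ parametrizing the families of real Cauchy-Riemann operators obtained by restricting $(W,\ti\phi)$, $\La_{\C}^{\top}(W,\ti\phi)$, and $(M_G\!\times\!\C,\si_G\!\times\!\fc_{\C})$ to the fibers~$\Si_s$. The point is to reach this equality without the spin hypotheses of Proposition~\ref{bndlsplit_prp0}: since $W^{\ti\phi}$ need not be orientable and $w_2(W^{\ti\phi})$ need not vanish pointwise, there is in general no bundle isomorphism $(W,\ti\phi)\approx\La_{\C}^{\top}(W,\ti\phi)\oplus(n\!-\!1)(M_G\!\times\!\C,\si_G\!\times\!\fc_{\C})$ to induce~\eqref{extlmm_e}, so I must argue directly at the level of determinant-line signs rather than produce a splitting of bundles.

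First I would carry out the reduction to the identity diffeomorphism, whose idea is taken from~\cite{Remi}. Since $c_1(W)|_{\Si_s}\!=\!0$, the restriction of $W$ to each fiber is topologically trivial as a complex bundle, so after applying the normalization of Lemma~\ref{lmm_repar} on the fixed circles I can split off trivial summands fiberwise and homotope the family of operators, which changes no determinant-line monodromy, until the only surviving data is the clutching of $W$ across the two ends of $\bI\!\times\!\Si$ identified by~$G$. Absorbing $G$ into a path of trivializations then presents the monodromy as that of a loop of real bundle pairs over the fixed surface $(\Si,\si)$, i.e.~as a computation over the mapping torus of~$\id_\Si$. The residual invariant produced by this reduction is the free homotopy class in $\pi_1(\tO(n))\!=\!\Z_2\!\times\!\Z_2$ of the restriction of the clutching map to each fixed circle; assembled over a $G$-orbit of fixed circles, these classes record the pair $\big(w_1(W^{\ti\phi}),w_2(W^{\ti\phi})\big)$ on the corresponding component $(M_G^{\si_G})_i$, exactly as in the computation \eqref{spincond_e}--\eqref{repar_e} in the proof of Proposition~\ref{bndlsplit_prp0}, but now without forcing it to be trivial.

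In the identity case I would finish with the topological sign computations of~\cite{GZ2}, which is the part handled differently from~\cite{Remi}: Corollary~2.2 of~\cite{GZ2} identifies the sign for the trivial rank~$1$ pair with the monodromy of the chosen cover of $\cM_g^{\si}$, while Propositions~4.1 and~4.2 of~\cite{GZ2} compute the sign for the square of a rank~$1$ pair and compare the sign for $(W,\ti\phi)$ with that for $\La_{\C}^{\top}(W,\ti\phi)$ up to the parity of the rank. Because the orientation issues for real Cauchy-Riemann operators concentrate along the real locus, the discrepancy between the two sides of~\eqref{extlmm_e} localizes to the components $(M_G^{\si_G})_i$; the $w_1$-contributions cancel between $(W,\ti\phi)$ and $\La_{\C}^{\top}(W,\ti\phi)\oplus(n\!-\!1)(M_G\!\times\!\C,\si_G\!\times\!\fc_{\C})$, since the fixed real line bundle of $\La_{\C}^{\top}(W,\ti\phi)$ has the same $w_1$ as $W^{\ti\phi}$ and the trivial summands contribute nothing, leaving only the $w_2$-contributions, whose total is precisely $\sum_{i}\blr{w_2(W^{\ti\phi}),[(M_G^{\si_G})_i]_{\Z_2}}$. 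Hypothesis~\eqref{extlmm_e0} makes this sum vanish, which gives~\eqref{extlmm_e}.

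I expect the reduction to the identity to be the main obstacle. The delicate point is to show that the obstruction genuinely localizes to $M_G^{\si_G}$ and that the reduction preserves the $w_2$-data, rather than shifting it by the $G$-permutation of the fixed circles, since here I cannot fall back on an honest bundle splitting and must instead track signs through the homotopies and fiberwise trivializations. Controlling how the $\pi_1(\tO(n))$-valued clutching classes on a single $G$-orbit of circles assemble into the $w_2$-pairing over the two-dimensional component $(M_G^{\si_G})_i$, and verifying that only this summed quantity, and not the individual, possibly nonzero, pointwise classes, enters the final answer, is where the argument requires the most care; this is also precisely the place where the weaker hypothesis~\eqref{extlmm_e0} replaces the pointwise spin condition of Proposition~\ref{bndlsplit_prp0}.
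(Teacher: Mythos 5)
Your proposal is correct and follows essentially the paper's own argument: after the \cite{BHH} clutching description and the normalization of Lemma~\ref{lmm_repar}, the clutching isomorphism is factored, as in \cite{Remi}, through the split map $\det\ti{G}\oplus G\!\times\!\id_{\C^{n-1}}$ (whose determinant-line sign is by definition the right-hand side of \eref{extlmm_e}), leaving a residual $\SU(n)$-valued automorphism covering $\id_\Si$ whose sign is identified, via \cite[Proposition~4.2]{GZ2} together with \cite[Lemma~2.1]{Teh}, with $\sum_{i}\blr{w_2(W^{\ti\phi}),[(M_G^{\si_G})_i]_{\Z_2}}$, which vanishes by \eref{extlmm_e0}. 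The one imprecision is your claim that ``absorbing $G$ into a path of trivializations'' reduces the entire monodromy to a computation over the mapping torus of $\id_\Si$ --- the right-hand side of \eref{extlmm_e} is not zero in general, so only the \emph{discrepancy} between the two sides reduces to an identity-case computation, which is how your final paragraph in fact proceeds.
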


\begin{proof}
By \cite[Propositions~4.1,~4.2]{BHH}, 
$$(W,\ti\phi)=\big(\bI\!\times\! \Si\!\times\!\C^n,
\id_{\bI}\!\times\!\si\!\times\!\fc_{\C^n}\big)/\!\!\sim_{(G,g)},
\quad\hbox{where}~~~
(1,z,v)\sim_{(G,g)}\big(0,G(z),g(z)v\big)\quad\forall\,(z,v)\!\in\! \Si\!\times\!\C^n,$$
for some map $g\!: \Si\!\lra\!\tU(n)$ such that $g(\si(z))\!=\!\ov{g(z)}$.
Let
\begin{alignat*}{2}
\ti{G}\!:\{1\}\!\times\!\Si\!\times\!\C^n&\lra \{0\}\!\times\!\Si\!\times\!\C^n, &\qquad
\ti{G}(1,z,v)&=\big(0,G(z),g(z)v\big),\\
\det\ti{G}\!:\{1\}\!\times\!\Si\!\times\!\C&\lra \{0\}\!\times\!\Si\!\times\!\C, &\qquad
\det\ti{G}(1,z,v)&=\big(0,G(z),(\det g(z))v\big).
\end{alignat*}
Similarly to the proof of \cite[Proposition~1.2]{Remi}, we write
$$\ti{G}=\big\{\det\ti{G}\oplus G\!\times\!\id_{\C^{n-1}}\big\}\circ 
\big\{\det\ti{G}\oplus G\!\times\!\id_{\C^{n-1}}\big\}^{-1}\circ\ti{G}\,.$$
Choose an orientation on  the determinant bundle of $\dbar$ on 
$W_{|\bI\times\Si}$ by a choice of trivializations as in \cite[Section 4.1]{GZ2}. 
The (exponent of the) sign of the isomorphism induced by~$\ti{G}$ between 
the determinant lines of $\dbar_{(W,\ti\phi)}$ over $s\!=\!1$ and $s\!=\!0$ is 
the sum of the the signs induced by the isomorphisms
\BE{compmaps_e}\begin{split}
\det\ti{G}\oplus G\!\times\!\id_{\C^{n-1}}\!:
\{1\}\!\times\!\Si\!\times\!\C^n&\lra \{0\}\!\times\!\Si\!\times\!\C^n
\qquad\hbox{and}\\
\big\{\det\ti{G}\oplus G\!\times\!\id_{\C^{n-1}}\big\}^{-1}\circ\ti{G}\!:
\{1\}\!\times\!\Si\!\times\!\C^n&\lra \{1\}\!\times\!\Si\!\times\!\C^n.
\end{split}\EE
The latter map covers the identity and can be written~as
$$(1,z,v)\lra \big(1,z,h(z)v\big)$$
for some $h:\Si\!\lra\!\SU(n)$ such that $h(\si(z))\!=\!\ov{h(z)}$.
By \cite[Proposition 4.2]{GZ2} applied with 
$$(X,\phi)=\big(S^1\!\times\!\Si,\id_{S^1}\!\times\!\si\big), \qquad 
(V,\ti\phi)=\big(\bI\!\times\!\Si\!\times\!\C^n/\!\!\sim_{(\id_{\Si},h)},
\si\!\times\!\fc_{\C^n}\big)$$
and~\eref{extlmm_e0}, the sign induced by this map equals
$$\sum_{i=1}^m \blr{w_2(W^{\ti\phi}),(M_G^{\si_G})_i}=0\,;$$
the equivariant $w_2$ in \cite[Proposition 4.2]{GZ2} vanishes by \cite[Lemma~2.1]{Teh},
since $h$ takes values in~$\SU(n)$.
The sign induced by the first map in~\eref{compmaps_e} gives~\eref{extlmm_e}.
\end{proof}

\begin{crl}\label{ext_crl}
Let $( \Si,\si)$ be a symmetric surface,
$G\!:( \Si,\si)\!\lra\!( \Si,\si)$ be a real orientation-preserving diffeomorphism,
$(W,\ti\phi)$ be a real bundle pair over~$(M_G,\si_G)$, and $(M_G^{\si_G})_i$, 
for \hbox{$i\!=\!1,\ldots,m$}, be the fixed components of $\si_G$. If
$$\sum_{i=1}^m \blr{w_2(W^{\ti\phi})\!+\!w_1(W^{\ti\phi})^2,[(M_G^{\si_G})_i]_{\Z_2}}=0,$$ 
then 
$$w_1(\det\dbar_{(W,\ti\phi)}) + w_1(\det\dbar_{(\La_{\C}^{\top}(W,\ti\phi))^*})
=(n\!+\!1)\,w_1(\det\dbar_{(M_G\!\times\!\C,\si_G\!\times\!\fc_{\C})}).$$
 \end{crl}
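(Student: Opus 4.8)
The plan is to deduce the corollary from Lemma~\ref{ext_lmm} in precisely the way Proposition~\ref{bndlsplit_prp} is deduced from Proposition~\ref{bndlsplit_prp0}: I would apply the lemma to the rank~$(n\!+\!1)$ real bundle pair
$$\wt{W}\equiv(W,\ti\phi)\oplus\big(\La_{\C}^{\top}(W,\ti\phi)\big)^*$$
over~$(M_G,\si_G)$ and then read off the two sides of~\eref{extlmm_e} using the multiplicativity of determinant lines under~\eref{sum}.

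First I would verify that $\wt{W}$ satisfies the two hypotheses of Lemma~\ref{ext_lmm}. Since
$$c_1(\wt{W})=c_1(W)+c_1\big((\La_{\C}^{\top}W)^*\big)=c_1(W)-c_1(W)=0$$
identically on~$M_G$, its restriction to each slice~$\Si_s$ vanishes automatically; this is exactly what frees the corollary from the hypothesis $c_1(W)|_{\Si_s}\!=\!0$ present in the lemma. For the second hypothesis I would compute the relevant Stiefel-Whitney class of the real locus. Writing $V\!=\!W^{\ti\phi}$ and using that the real locus of $\La_{\C}^{\top}(W,\ti\phi)$ is canonically $\La_{\R}^{\top}V$, so that $\big((\La_{\C}^{\top}W)^*\big)^{\ti\phi}$ is a real line bundle with $w_1$ equal to $w_1(\La_{\R}^{\top}V)=w_1(V)$, the Whitney sum formula gives
$$w_2\big(\wt{W}^{\ti\phi}\big)=w_2(V)+w_1(V)\,w_1(V)=w_2(W^{\ti\phi})+w_1(W^{\ti\phi})^2.$$
Hence the displayed hypothesis of the corollary is exactly condition~\eref{extlmm_e0} for~$\wt{W}$, and Lemma~\ref{ext_lmm} applies.

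Applying the lemma to $\wt{W}$, whose rank is $n\!+\!1$, then gives
$$w_1(\det\dbar_{\wt{W}})=w_1(\det\dbar_{\La_{\C}^{\top}\wt{W}})+n\,w_1(\det\dbar_{(M_G\times\C,\si_G\times\fc_{\C})}),$$
the coefficient being $(n\!+\!1)\!-\!1=n$. It remains to identify the determinant-line terms with those in the claim. By the short exact sequence~\eref{sum}, the determinant line of a direct sum factors, so $w_1(\det\dbar_{\wt{W}})=w_1(\det\dbar_{(W,\ti\phi)})+w_1(\det\dbar_{(\La_{\C}^{\top}(W,\ti\phi))^*})$, which is the left-hand side of the asserted equality. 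On the other hand, $\La_{\C}^{\top}\wt{W}=\La_{\C}^{\top}(W,\ti\phi)\otimes(\La_{\C}^{\top}(W,\ti\phi))^*$ is canonically isomorphic to the trivial pair $(M_G\times\C,\si_G\times\fc_{\C})$, so its determinant-line term coincides with that of the trivial pair and collects with the last term into the coefficient $n\!+\!1$. This produces the claim.

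The substance of the corollary is already contained in Lemma~\ref{ext_lmm}, so the only genuinely nonformal point is the Whitney-class identity $w_2(\wt{W}^{\ti\phi})=w_2(W^{\ti\phi})+w_1(W^{\ti\phi})^2$, which rests on the identification $w_1\big((\La_{\C}^{\top}(W,\ti\phi))^{\ti\phi}\big)=w_1(W^{\ti\phi})$ of the first Stiefel-Whitney class of the real top exterior power with that of the real locus; everything else is bookkeeping with~\eref{sum} and the triviality of $\La_{\C}^{\top}\wt{W}$.
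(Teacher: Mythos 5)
Your proposal is correct and follows exactly the paper's route: the published proof applies Lemma~\ref{ext_lmm} to $(W,\ti\phi)\oplus\big(\La_{\C}^{\top}(W,\ti\phi)\big)^*$ and concludes as in the proof of Proposition~\ref{bndlsplit_prp}, which is precisely your argument. You merely spell out the details the paper leaves implicit (the automatic vanishing of $c_1$ on the slices and the Whitney-class identity $w_2(\wt{W}^{\ti\phi})=w_2(W^{\ti\phi})+w_1(W^{\ti\phi})^2$), and these are verified correctly.
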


\begin{proof}
Applying Lemma~\ref{ext_lmm}  with  $W\oplus\big(\La_{\C}^{\top}(W,\ti\phi)\big)^*$, 
we obtain the result as in the proof of Proposition~\ref{bndlsplit_prp}.  
\end{proof}

\begin{proof}[{\bf \emph{Proof of Theorem \ref{thm_maps2}}}] 
We follow the proof of Theorem~\ref{thm_maps} with the bundle $TX\oplus 2E$ in 
place of $TX\oplus 2L$. 
The only difference  in the proof is showing that the first Stiefel-Whitney class 
of the  determinant bundle of a $\dbar$-operator on the pull-back of 
$$\La_\C^{\top}( TX\oplus 2E, \text{d}\phi\oplus 2\ti\phi_E)
= (L\otimes \La_{\C}^{\top}E, \ti\phi_L\otimes\La_{\C}^{\top}\ti\phi_E)^{\otimes 2}
\equiv(W,\ti\phi_W)$$ 
equals that on the trivial rank 1 real bundle pair. 
As shown in the proof of \cite[Corollary~3.3]{GZ2},
\begin{equation*}\begin{split}
\sum_{i=1}^m \blr{w_2(W^{\ti\phi})\!+\!w_1(W^{\ti\phi})^2,[(M_G^{\si_G})_i]_{\Z_2}}
&=\sum_{i=1}^m 
\blr{w_1((L\!\otimes\!\La_{\C}^{\top}E)^{\ti\phi_L\otimes\La_{\C}^{\top}\ti\phi_E})^2, 
[(M_G^{\si_G})_i]_{\Z_2}}\\
&= \sum_{i=1}^{m'} 
\blr{w_1((L\otimes\La_{\C}^{\top}E)^{\ti\phi_L\otimes\La_{\C}^{\top}\ti\phi_E}),
[(\Si^{\si})_i]_{\Z_2}},
\end{split}\end{equation*}
where $\Si_i^{\si}$, for $i=1,\ldots,m'$, are the components of $\Si^{\si}$.
By \cite[Propositions 4.1,4.2]{BHH}, the last expression is 0 if $2|c_1(L\!\otimes\!E)$.
Corollary \ref{ext_crl} now completes the proof.
\end{proof}

\vspace{.2in}

\noindent
{\it Department of Mathematics, Princeton University, Princeton, NJ 08544\\
pgeorgie@math.princeton.edu}\\

\noindent
{\it Department of Mathematics, SUNY Stony Brook, Stony Brook, NY 11790\\
azinger@math.sunysb.edu}\\

\end{document}